\newtheorem{theorem}{Theorem}[section]
\newtheorem{proposition}{Proposition}[section]
\newtheorem{lemma}{Lemma}[section]
\newtheorem{corollary}{Corollary}[section]
\numberwithin{equation}{section}
\theoremstyle{definition}
\newtheorem{definition}[theorem]{Definition}
\newtheorem{remark}[theorem]{Remark}
\newtheorem{example}[theorem]{Example}
\newcommand{\R}{\mathbb{R}}
\newcommand{\prox}[1]{\mbox{\rm prox}_{#1}}
\newcommand{\phic}{\frac{\phi}{c}}
\newcommand{\mi}{\mathbf{i}}
\newcommand{\mo}{\mathbf{o}}
\title{Lagrange optimality system for a class of nonsmooth convex optimization.}
\author{
Bangti Jin\thanks{Department of Computer Science, University College London, Gower Street, London WC1E 6BT, UK. (bangti.jin@gmail.com)}
\and 
Tomoya Takeuchi\thanks{Institute of Industrial Science, The University of Tokyo, Tokyo, Japan. (takeuchi@sat.t.u-tokyo.ac.jp)}}
\date{}
\begin{document}
\maketitle

\begin{abstract}
In this paper, we revisit the augmented Lagrangian method for a class of nonsmooth convex optimization. 
We present the Lagrange optimality system of the augmented Lagrangian associated with the problems,
and establish its connections with the standard optimality condition and the saddle point condition of the augmented Lagrangian,
which provides a powerful tool for developing numerical algorithms.
We apply a linear Newton method to the Lagrange optimality system to obtain a novel algorithm applicable to a variety of nonsmooth convex optimization problems arising in practical applications.
Under suitable conditions, we prove the nonsingularity of the Newton system and the local convergence of the algorithm.
\\
\textbf{Keywords} nonsmooth convex optimization, augmented Lagrangian method, Lagrange optimality system, Newton method\\
\textbf{Mathematics Subject Classification (2010)} 90C25, 46N10, 49M15
\end{abstract}
{\footnotesize}
\section{Introduction}
In this paper we consider the augmented Lagrangian method for solving a class of nonsmooth convex optimization problems
\begin{equation}\label{min}
 \min_{x\in X} f(x) + \phi(E x),
\end{equation}
where the function $f \colon X \rightarrow \R$ is convex and continuously differentiable on a Banach space $X$, $\phi \colon
H \rightarrow \R^+$ is a proper, lower semi-continuous and convex function on a Hilbert space $H$, and $E$ is a
bounded linear operator from $X$ to $H$. We assume that the proximity operator of the convex function $\phi$ has a closed form expression.
This problem class encompasses a wide range of optimization problems arising in practical applications, e.g., inverse problems, variational problems, image processing, signal processing
and statistics to name a few
\cite{boyd2011distributed,ChanShen:2005,Combettes+Wajs-Signrecoproxforw:05,Ekeland+Temam-Convanalvariprob:99,Glowinski-Numemethnonlvari:08,ItoJin:2014,Ito+Kunisch-Lagrmultapprvari:08}.

The augmented Lagrangian method was proposed independently by Hestenes \cite{Hestenes-Multgradmeth:69} and Powell
\cite{Powell-methnonlconsmini:69} for solving nonlinear programming problems
with equality constraints.
The method was studied in relation to Fenchel duality and generalized to nonlinear programming problems with inequality constraints by Rockafellar \cite{Rockafellar-dualapprsolvnonl:73,Rockafellar-multmethHestPowe:73}. Later it was further generalized to the problem~\eqref{min} by Glowinski and Marroco \cite{glowinski1975} where the augmented Lagrangian is given by
\begin{equation*}
\mathcal{L}_c(x,v,\lambda) = f(x) + \phi(v) + (\lambda, E x - v) + \frac{c}{2}\Vert E x - v\Vert^2.
\end{equation*}
The inner product $(\lambda,E x-v)$ dualizes the equality constraint, and the quadratic term penalizes the
constraint violation for the following equality constrained problem equivalent to problem~\eqref{min}:
\begin{equation*}
  \min_{x\in X,v\in H} f(x) + \phi(v) \quad \mbox{ subject to } \quad E x = v.
\end{equation*}
A solution of problem~\eqref{min} can be characterized, under certain conditions on $f$, $\phi$ and $E$, as a saddle point of the augmented Lagrangian, and
the strong duality theorem leads to first-order algorithms for the dual function $\theta(\lambda)=\inf_{x,v}\mathcal{L}_c(x,v,\lambda)$.
In practical implementation, the combination of the dualization and the penalization alleviates the slow convergence for the ordinary Lagrangian methods and ill conditioning as $c\rightarrow \infty$ for penalty methods. Due to these advantages over the standard Lagrangian formulation and the penalty formulation, a large number of first order algorithms based on the augmented Lagrangian $\mathcal{L}_c$ have been developed for a wide variety of applications; see e.g., \cite{boyd2011distributed,GlowinskiLeTallec:1989,parikh2013proximal,WuTai:2010}.

An  alternative Lagrangian for~\eqref{min} has been introduced by Fortin~\cite{Fortin-Minisomenon-func:75}, which was obtained by employing the partial conjugate of the augmented perturbation bifunction $F_c(x,v) = f(x) + \phi(E x - v) + \frac{c}{2}\Vert v\Vert^2$ due to Rockafeller \cite{Rockafellar-dualapprsolvnonl:73}:
\begin{align*}
  L_c(x,\lambda) &= \min_{v\in H}\left( (v,\lambda) + F_c(x,v) \right)
  =\min_{v\in H}\left( (v,\lambda) + f(x) + \phi(E x - v) + \frac{c}{2}\Vert v\Vert^2\right)  \nonumber \\
  &= \min_{u\in H}\left( (E x - u,\lambda) + f(x) + \phi(u) + \frac{c}{2}\Vert E x - u\Vert^2\right)  \nonumber  \\
  & =f(x) + \min_{u\in H}\left( \phi(u) + (\lambda,E  x - u) + \frac{c}{2}\Vert E x -u\Vert^2\right) \nonumber  \\
  & = f(x) + \phi_c(E x + \lambda/c) -\tfrac{1}{2c}\|\lambda\|^2,  
\end{align*}
where $c$ is a positive constant and the function $\phi_c(z)$ is the Moreau envelope (see Section~\ref{sec:proximity} for the definition). It was shown that a saddle point of $L_c$ is also a saddle point of the standard Lagrangian and conversely \cite[Thm.~2.1]{Fortin-Minisomenon-func:75}.
A first order algorithm often referred to as the \textit{augmented Lagrangian algorithm}, which is quite similar to the one developed in \cite{glowinski1975}, was proposed for certain special cases of the function $\phi$ \cite[Thm.~4.1]{Fortin-Minisomenon-func:75}.
The augmented Lagrangian method was further studied by Ito and Kunisch \cite{Ito+Kunisch-AugmLagrmethnons:00}
for the following optimization problem
\begin{equation}\label{minc}
 \min_{x\in C} f(x) + \phi(E x),
\end{equation}
where $C$ is a convex set in $X$. One of their major achievements is the results concerning the existence of a Lagrange multiplier for problem \eqref{minc}:
It was shown that under appropriate conditions Lagrange multipliers of a regularized problem defined by the augmented Lagrangian $L_c$ converge and the limit is a Lagrange multiplier of problem \eqref{minc}.
In addition to the valuable contribution, the augmented Lagrangian algorithm by Fortin was extended to a more general class of convex functions $\phi$,
and the convergence of the algorithm was established. It is noted that the problem can be reformulated into problem~\eqref{min}, by redefining the convex function $\phi$ and the linear map $E$ by
$\phi(x,y) := \phi(x) + \chi_C(y)$ and $E x := (E x, x)$, respectively, where $\chi_C$ is the
characteristic function of the convex set $C$. Hence, it shares an identical structure with problem~\eqref{min}.

The augmented Lagrangian $L_c$ is Fr\'{e}chet differentiable, cf. Section~\ref{sec:optimality}, which
motivates the use of the Lagrange optimality system
\begin{equation}\label{sys:lag}
D_{x}L_c(x,\lambda) = 0,\quad \mbox{and}\quad \quad D_{\lambda}L_c(x,\lambda) = 0,
\end{equation}
to characterize the saddle point and hence the solution of problem~\eqref{min}.
This perspective naturally leads to the application of Newton methods for solving the nonlinear system. However, the Moreau envelope involved
in~\eqref{sys:lag}, cf. Proposition~\ref{prop:KKT}, is twice continuously differentiable if and only if the same is true
for the convex function $\phi$ \cite{Lemarechal+Sagastizabal-PracaspeMoreregu:97}, and thus the standard (classical)
Newton methods cannot be applied directly to the Lagrange optimality system.
Semismooth Newton methods and quasi-Newton
methods are possible alternatives for solving the Lagrange optimality system, but there are some drawbacks in their
applications to the Lagrange optimality system: The inclusion appearing in the chain rule of a composite
map makes it difficult to theoretically identify a generalized or limiting Jacobian of $D_{x,\lambda}L_c$ for semismooth
Newton methods, while the superlinear convergence of quasi-Newton methods holds only when the system to be
solved is differentiable at the solution \cite{Ip+Kyparisis-Locaconvquasmeth:92}.
We opt for instead linear Newton methods \cite{Facchinei+Pang-Finivariineqcomp:03a} to solve the Lagrange optimality system~\eqref{sys:lag},
where one replaces the generalized Jacobian of $D_{x,\lambda}L_c$ in semismooth Newton methods with a \textit{linear Newton approximation (LNA)} of $D_{x,\lambda}L_c$.
Calculus rules, which provide a systematic way of generating LNAs of a given map,
reduce the construction of a LNA of the Lagrange optimality system to the computation of the (Clarke's) generalized or limiting Jacobian of the proximity operator involved in the system, cf. Section~\ref{sec:newton}.

The focus of this work is twofold.
First, we present the Lagrange optimality system,
which was not provided in both \cite{Fortin-Minisomenon-func:75} and \cite{Ito+Kunisch-AugmLagrmethnons:00},
and establish its connection with the standard optimality system of problem~\eqref{min} and the saddle point condition of the augmented Lagrangian.
Second, we develop a Newton type algorithm for the Lagrange optimality system.
To the best of our knowledge, this is the first work using the Lagrange optimality system for developing
Newton type algorithms for nonsmooth convex optimization~\eqref{min}.
These two aspects represent the essential contributions of this work.

The rest of the paper is organized as follows. In Section~\ref{sec:proximity} we collect fundamental results on the
Moreau envelope and the promixity operator, which provide the main tools for the analysis. In Section~\ref{sec:optimality},
we investigate the connection among the optimality system for the problem~\eqref{min}, the Lagrange optimality system and the saddle point of the augmented Lagrangian $L_c$.
In Section~\ref{sec:newton}, we develop a Newton method for problem~\eqref{min}, which exhibits a local Q-superlinear convergence. 

\subsection{Notations}
We denote by $X$ a real Banach space with the norm $| \cdot|$.
The duality bracket between the dual space $X^\ast$ and $X$ is denoted by $\langle\cdot,\cdot\rangle_{X^\ast,X}$.
For a twice continuously differentiable function $f$, its derivative is denoted by $Df(x)$ or $D_xf(x)$, and its Hessian by $D^2_xf(x)$. $H$ is a Hilbert space
with the inner product $(\cdot,\cdot)$, and the norm on $H$ is denoted by $\| \cdot\|$. The set of proper, lower
semicontinuous, convex functions defined on the Hilbert space $H$ is denoted by $\Gamma_0(H)$. The effective domain
of a function $\phi\in\Gamma_0(H)$ is denoted by $\mathcal{D}(\phi)=\{z\in H\mid \phi(z)\mbox{ is finite}\}$, and
it is always assumed to be nonempty.
For a function $\phi\in \Gamma_0(H)$, the convex conjugate $\phi^*$ is defined by $\phi^{\ast}(z^\ast) = \sup_{z\in H}\left( (z^{\ast}, z) - \phi(z) \right)$.
A \textit{subgradient} of $\phi$ at $x\in H$ is $g\in H$ satisfying
\begin{equation*}
\phi(y)\ge \phi(x) + (g, y-x),\quad \forall y \in H.
\end{equation*}
The \textit{subdifferentials} of $\phi$ at $x$ is the set of all subgradients of $\phi$ at $x$, and is denoted by $\partial \phi(x)$.
\section{Moreau envelope and proximity operator}\label{sec:proximity}
The central tools for analyzing the augmented Lagrangian approach are Moreau envelope and proximity operator.
We recall their definitions and basic properties that are relevant to the development of the Lagrange multiplier theory.
We note that for $\phi\in\Gamma_0(H)$ the strictly convex function $u\rightarrow \phi(u) + \frac{1}{2}\Vert u-z\Vert^2$
admits a unique minimizer.

\begin{definition}
Let $\phi\in \Gamma_0(H)$ and $c>0$.
The \textit{Moreau envelope} $\phi_c :H \rightarrow \R$ and the \textit{proximity operator} $ \prox{\phi}: H\rightarrow H$  are defined respectively as
\begin{align*}
  \phi_c(z)& = \min_{u\in H}\left(\phi(u)+\tfrac{c}{2}\Vert u- z\Vert^2 \right),\\
\prox{\phi}(z)&=\arg\min_{u \in H} \left(\phi(u) + \tfrac{1}{2}\Vert u-z\Vert^2 \right),
\end{align*}
for $z\in H$.
\end{definition}
By definition we have
\begin{equation*}
  \begin{aligned}
  \prox{\frac{\phi}{c}}(z) = \arg \min_{u \in H}\left(\tfrac{\phi(x)}{c}+\tfrac{1}{2}\Vert u-z\Vert^2\right)=\arg \min_{u\in H} \left(\phi(u)+\tfrac{c}{2}\Vert u-z\Vert^2  \right),
  \end{aligned}
\end{equation*}
and
\begin{equation*}
  \phi_c(z) =\phi(\prox{\phic}(z))+\tfrac{c}{2}\Vert \prox{\phic}(z)- z\Vert^2.
\end{equation*}
We refer interested readers to Tables~10.1
and 10.2  of \cite{Combettes+Wajs-Signrecoproxforw:05} for closed-form expressions of a number of frequently used proximity operators.

We recall well-known properties of the Moreau envelope and proximity operator.
\begin{proposition}[\protect{\cite{Bauschke+Combettes-Convanalmonooper:11}}]\label{prop:basic}
Let $z\in H$ and $c>0$. Let $\phi\in \Gamma_0(H)$.
\begin{enumerate}
\item[{\rm (a)}]  $0 \le \phi(z)-\phi_c(z) $ for all $z \in H$ and all $c>0$.
\item[{\rm (b)}]  $\lim_{c\to \infty}\phi_c(z) = \phi(z)$ for all $z\in H$.
\item[{\rm (c)}] The proximity operator $\prox{\phic}$ is nonexpansive, that is,
\begin{equation*} 
 \Vert \prox{\phic}(z) - \prox{\phic}(w)\Vert^2 \le (\prox{\phic}(z) - \prox{\phic}(w), z-w), \quad \forall z, \forall w\in H.
 \end{equation*}
 \item[{\rm (d)}] The Moreau envelope $\phi_c$ is Fr\'echet differentiable and the gradient is given by
\begin{equation}\label{phip}
   D_z\phi_c(z) =c(z - \prox{\phic}(z) ), \quad \forall c>0, \forall z\in H.
\end{equation}
\item[{\rm (e)}] The gradient $z \rightarrow D_z\phi_c(z) \in H$ is Lipschitz continuous with a Lipschitz constant $c$, i.e.,
\begin{equation*} 
\Vert D_z \phi_c(z) -  D_z \phi_c(w)\Vert \le c\Vert z -w\Vert,  \quad \forall z ,\forall w \in H.
\end{equation*}
\item[{\rm (f)}] The Moreau envelope and the proximity operator of the conjugate of $\phi$ are related with $\phi_c$ and $\prox{\phic}$, respectively as
\begin{equation*} 
\phi_c(z) + (\phi^\ast)_{\frac{1}{c}}\left(cz\right) =\tfrac{c}{2}\Vert z \Vert^2, \quad
\prox{\phic}(z) + \tfrac{1}{c}\prox{c\phi^{\ast}}\left(cz\right) = z.
\end{equation*}
\end{enumerate}
\end{proposition}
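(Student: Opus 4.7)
The plan is to prove the six items in order, using throughout the variational characterization of $\prox{\phic}(z)$ as the unique minimizer of the strongly convex functional $u \mapsto \phi(u) + \frac{c}{2}\|u-z\|^2$.

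Part (a) follows immediately by taking $u = z$ as a competitor in the infimum defining $\phi_c(z)$. For (b), I set $u_c := \prox{\phic}(z)$; the optimality estimate $\phi(u_c) + \tfrac{c}{2}\|u_c - z\|^2 \le \phi(z)$, combined with any fixed affine minorant of $\phi$ (available from a single subgradient at a point of $\mathcal{D}(\phi)$), forces $u_c \to z$ as $c \to \infty$. Lower semicontinuity of $\phi$ then yields $\liminf_{c\to\infty} \phi_c(z) \ge \phi(z)$, which combined with (a) gives equality.

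For (c), the first-order optimality condition for the strongly convex minimization reads $z - \prox{\phic}(z) \in \partial(\phi/c)(\prox{\phic}(z))$, and similarly at $w$. Pairing these two inclusions via the monotonicity of the subdifferential of the convex function $\phi/c$ immediately produces the firm nonexpansiveness inequality. For (d), I use the envelope inequality $\phi_c(z') \le \phi(u^\star(z)) + \tfrac{c}{2}\|u^\star(z) - z'\|^2$, with $u^\star(z) := \prox{\phic}(z)$ as a competitor, and the reverse inequality obtained by swapping $z$ and $z'$; expanding the squares gives
\[
c(z' - u^\star(z'), z' - z) - \tfrac{c}{2}\|z'-z\|^2 \le \phi_c(z') - \phi_c(z) \le c(z - u^\star(z), z' - z) + \tfrac{c}{2}\|z'-z\|^2,
\]
and continuity of $u^\star$ (inherited from the nonexpansiveness proved in (c)) lets both sides collapse to $(c(z - u^\star(z)), z' - z) + o(\|z'-z\|)$, establishing Fr\'echet differentiability together with the formula \eqref{phip}. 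Statement (e) is then immediate: from (d), $D_z\phi_c(z) - D_z\phi_c(w) = c\bigl((z-w) - (\prox{\phic}(z) - \prox{\phic}(w))\bigr)$, and the firm nonexpansiveness in (c) implies that $I - \prox{\phic}$ is itself nonexpansive, giving the Lipschitz constant $c$.

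The main obstacle is (f), the Moreau decomposition. I fix $z$ and set $u^\star := \prox{\phic}(z)$ and $v^\star := c(z - u^\star)$. The optimality condition for $u^\star$ gives $v^\star \in \partial \phi(u^\star)$, and the Fenchel identity $w \in \partial\phi(u) \Longleftrightarrow u \in \partial\phi^*(w)$ then produces $u^\star \in \partial\phi^*(v^\star)$, which rewrites as $cz - v^\star \in c\,\partial\phi^*(v^\star)$, precisely the optimality condition characterizing $v^\star = \prox{c\phi^*}(cz)$; this yields the prox identity in (f). For the envelope identity, I substitute $\phi_c(z) = \phi(u^\star) + \tfrac{c}{2}\|u^\star - z\|^2$ and $(\phi^*)_{1/c}(cz) = \phi^*(v^\star) + \tfrac{1}{2c}\|v^\star - cz\|^2$, apply Fenchel's equality $\phi(u^\star) + \phi^*(v^\star) = (u^\star, v^\star)$ (valid because $v^\star \in \partial\phi(u^\star)$), and use $v^\star = c(z - u^\star)$; a direct expansion of the resulting quadratic terms collapses the sum to $\tfrac{c}{2}\|z\|^2$.
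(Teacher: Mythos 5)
Your proposal is correct, and for items (a)--(e) it supplies elementary arguments where the paper simply cites \cite{Bauschke+Combettes-Convanalmonooper:11}; the only caveat is in (b), where your bound $\phi(u_c)+\tfrac{c}{2}\Vert u_c-z\Vert^2\le\phi(z)$ is vacuous when $z\notin\mathcal{D}(\phi)$, so that case (where one must still show $\phi_c(z)\to+\infty$) needs the extra observation that either $u_c\not\to z$ and the quadratic term dominates the affine minorant, or $u_c\to z$ and lower semicontinuity forces $\phi(u_c)\to+\infty$. The genuine difference lies in (f), the only item the paper actually proves. The paper establishes the envelope identity first, by a minimax argument: it introduces $L_z(u,p)=(u,p)-\phi(p)+\tfrac{1}{2c}\Vert u-cz\Vert^2$, invokes \cite[Chap.~6, Prop.~2.3]{Ekeland+Temam-Convanalvariprob:99} to equate $\inf_u\sup_p L_z$ with $\sup_p\inf_u L_z$, identifies these two values as $(\phi^\ast)_{\frac{1}{c}}(cz)$ and $\tfrac{c}{2}\Vert z\Vert^2-\phi_c(z)$ respectively, and then obtains the prox identity by differentiating the envelope identity in $z$ using (d). You go in the opposite direction: the prox identity comes first, from the inversion rule $v\in\partial\phi(u)\Leftrightarrow u\in\partial\phi^\ast(v)$ applied to the optimality condition $c(z-u^\star)\in\partial\phi(u^\star)$ for $u^\star=\prox{\phic}(z)$, and the envelope identity then follows by substituting the two minimizers and applying Fenchel's equality $\phi(u^\star)+\phi^\ast(v^\star)=(u^\star,v^\star)$. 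Both routes are sound; yours is more elementary (no minimax theorem, no coercivity check) and does not need the differentiability statement (d) as an input to reach the prox identity, while the paper's version makes the duality structure explicit, which is precisely what it wants to advertise as an ``alternative proof based on the duality theory.''
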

All the results are standard; The proofs can be found in e.g., \cite{Bauschke+Combettes-Convanalmonooper:11}.
Here we give an alternative proof of {\rm (f)} based on the duality theory.
\begin{proof}
For $z\in H$, we define the function $L_z\colon H\times \mathcal{D}(\phi)\rightarrow\R$ by
\begin{equation*}
L_z(u,p):=(u,p)-\phi(p)+\tfrac{1}{2c}\Vert u - cz\Vert^2.
\end{equation*}
Clearly, $L_z$ is convex in $u$ and is concave in $p$. We claim that $L_z$ posses a saddle point on
$H\times \mathcal{D}(\phi)$. Clearly, $\lim_{\Vert u\Vert \to \infty}L_z(u,p)=\infty$ for all
$p\in \mathcal{D}(\phi)$. Thus by \cite[Chap.~6, Prop.~2.3]{Ekeland+Temam-Convanalvariprob:99}, we have
\begin{equation}\label{minmax0}
\inf_u\sup_p L_z(u,p) = \sup_p\inf_u L_z(u,p).
\end{equation}
Now we compute $\inf_u\sup_p L_z(u,p)$ and $\sup_p \inf_u L_z(u,p)$ separately. First, we observe
\begin{align*}
\inf_u\sup_p L_z(u,p)&=\inf_{u}\left(\sup_{p}((u,p)-\phi(p)) + \tfrac{1}{2c}\Vert u-cz\Vert^2 \right)\\
  &=\inf_{u}\left(\phi^\ast(u) + \tfrac{1}{2c}\Vert u-cz\Vert^2\right)  =  (\phi^\ast)_{\frac{1}{c}}(cz).
\end{align*}
Meanwhile, we have
\begin{align*}
  \inf_{u}L_z(u,p)& = \inf_{u}\left(\tfrac{1}{2c}\Vert u -cz\Vert^2 + (p,u)\right)-\phi(p)
  =\tfrac{1}{2c}\Vert cp\Vert^2 + (p,c(z-p)) -\phi(p)\\
  &=c(p,z)-\tfrac{c}{2}\Vert p\Vert^2-\phi(p)=\tfrac{c}{2}\Vert z\Vert^2 -\left(\phi(p)+\tfrac{c}{2}\Vert p - z\Vert^2\right).
\end{align*}
Thus, we deduce
\begin{equation*}
\sup_p\inf_u L_z(u,p) = \sup_{p}\left(\tfrac{c}{2}\Vert z\Vert^2 -\left(\phi(p)+\tfrac{c}{2}\Vert p - z\Vert^2\right)\right)=\tfrac{c}{2}\Vert z\Vert^2 -\phi_c(z).
\end{equation*}
Therefore, from~\eqref{minmax0} we have
\begin{equation*}
  (\phi^\ast)_{\frac{1}{c}}(cz)  =\inf_{u}\sup_{p}L_z(u,p)=\sup_{p}\inf_{u}L_z(u,p)   =\tfrac{c}{2}\Vert z\Vert^2 -\phi_c(z),
\end{equation*}
which shows the first relation. Differentiating both side of this equation with respect to $z$ and using \eqref{phip} result in the second relation.
\end{proof}
The Moreau envelope and the proximity operator provide equivalent expressions of the inclusion
$\lambda\in\partial\phi(z)$.
\begin{proposition}\label{prop:proximity2}
Let $c>0$ be an arbitrary fixed constant and $\phi\in\Gamma_0(H)$.
Then the following conditions are equivalent.
\begin{enumerate}
\item[{\rm (a)}] 
$\lambda \in \partial \phi(z)$.
\item[{\rm (b)}] 
$z - \prox{\phic}(z+\lambda/c)=0$.
\item[{\rm (c)}] 
$\phi(z) = \phi_c(z+\lambda/c)-\frac{1}{2c}\Vert \lambda \Vert^2$.
\end{enumerate}
\end{proposition}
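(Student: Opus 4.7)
My plan is to establish the cycle (a) $\Leftrightarrow$ (b) $\Leftrightarrow$ (c), relying only on Fermat's rule for the strongly convex minimizations that define $\prox{\phic}$ and $\phi_c$, together with uniqueness of their minimizers.

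For (a) $\Leftrightarrow$ (b), I start from the defining minimization problem for $\prox{\phic}(w) = \arg\min_{u\in H}\bigl(\phi(u) + \tfrac{c}{2}\Vert u-w\Vert^2\bigr)$. Since the quadratic term is Fr\'echet differentiable, the sum rule for convex subdifferentials applies without any qualification condition, and the Fermat rule characterizes the minimizer by the inclusion $0 \in \partial\phi(u) + c(u - w)$, i.e., $c(w-u)\in \partial\phi(u)$. Therefore $u = \prox{\phic}(w)$ if and only if $c(w-u)\in\partial\phi(u)$. Specializing to $w = z + \lambda/c$ and $u = z$ rewrites this inclusion as $\lambda \in \partial\phi(z)$, which is exactly the equivalence of (a) and (b).

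For (b) $\Leftrightarrow$ (c), I would evaluate the objective $u\mapsto \phi(u)+\tfrac{c}{2}\Vert u - (z+\lambda/c)\Vert^2$ at the test point $u = z$; a direct computation yields $\phi(z) + \tfrac{c}{2}\Vert \lambda/c\Vert^2 = \phi(z) + \tfrac{1}{2c}\Vert\lambda\Vert^2$. By definition of the Moreau envelope this gives the one-sided bound $\phi_c(z+\lambda/c) \le \phi(z) + \tfrac{1}{2c}\Vert\lambda\Vert^2$, with equality precisely when $u = z$ attains the minimum. The strict convexity noted at the beginning of Section~\ref{sec:proximity} guarantees a unique minimizer, so equality is equivalent to $z = \prox{\phic}(z+\lambda/c)$, which is (b).

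There is no genuine obstacle here; the only point that needs to be flagged is the application of the subdifferential sum rule to characterize $\prox{\phic}$, but this is standard for $\phi \in \Gamma_0(H)$ together with an everywhere-finite continuous convex function. Once this characterization is in hand, the two equivalences are purely algebraic, and assembling them produces the desired three-way equivalence.
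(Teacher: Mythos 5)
Your proposal is correct and follows essentially the same route as the paper: the equivalence of (a) and (b) via Fermat's rule applied to the strongly convex objective defining $\prox{\phic}(z+\lambda/c)$, and the link to (c) by evaluating that objective at $u=z$ against the definition of the Moreau envelope. The only cosmetic difference is that the paper closes the cycle by deducing (a) directly from (c) through a second application of Fermat's rule to a shifted objective, whereas you return to (b) via uniqueness of the proximal minimizer; both are sound.
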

\begin{proof}
Let the pair $(z,\lambda)$ satisfy the condition $\lambda \in \partial \phi(z)$. This can be expressed as
\begin{equation*}
0 \in \partial \phi(z) + c(z - (z+\lambda/c)) = \partial_u \left(\phi(u) + \frac{c}{2}\Vert u - (z+\lambda/c)\Vert^2\right)|_{u=z},
\end{equation*}
which is equivalent to $z = \prox{\phic}
(z+\lambda/c)$. This shows the equivalence between {\rm (a)} and {\rm (b)}.
Next we show that {\rm (b)} implies {\rm (c)}. Suppose $z-\prox{\phic}(z+\lambda/c)=0$.
Then by the definition of $\phi_c$, it follows that
\begin{align*}
\phi_c(z+\lambda/c) &= \phi(\prox{\phic}(z+\lambda/c)) + \tfrac{c}{2}\Vert \prox{\phic}(z+\lambda/c)-(z+\lambda/c)\Vert^2\\
& = \phi(z) + \tfrac{c}{2}\Vert z-(z+\lambda/c)\Vert^2 = \phi(z) + \tfrac{1}{2c}\Vert \lambda\Vert^2.
\end{align*}
Finally, we show that {\rm (c)} implies {\rm (a)}. By the definition of the Moreau envelope, it follows that
\begin{equation*}
  \phi_c(z+\lambda/c)\le  \phi(u) + \frac{c}{2}\Vert u-(z+\lambda/c)\Vert^2, \qquad \forall u\in H,
\end{equation*}
which is equivalently written as
\begin{equation*}
\phi(z) = \phi_c(z+\lambda/c)-\frac{1}{2c}\Vert \lambda\Vert^2\le \phi(u) + \frac{c}{2}\Vert u-z\Vert^2 + (u-z,-\lambda),\qquad \forall u\in H.
\end{equation*}
This implies that the strictly convex function $u\rightarrow \phi(u) + \frac{c}{2}\Vert u-z\Vert^2 + (u-z,-\lambda)$ attains its minimum at $z$.
Thus
\begin{align*}
0\in \partial_u\left(\phi(u) + \frac{c}{2}\Vert u-z\Vert^2 + (u-z,-\lambda)\right)|_{u=z} =
 \partial \phi(u)- \lambda,
\end{align*}
which proves that {\rm (c)} implies {\rm (a)}.
\end{proof}
\section{The optimality systems}\label{sec:optimality}
In the classical optimization problem for a smooth cost function with equality constraints by smooth maps, it is well known that saddle points are characterized by Lagrange optimality system of the (standard) Lagrangian associated with the optimization problem. In this section, we show that the augmented Lagrangian $L_c$ generalizes the classical result to the nonsmooth convex optimization problem~\eqref{min}.
\begin{proposition}\label{prop:KKT}
Let $c>0$, $f$ be convex and continuously differentiable, and $\phi\in\Gamma_0(H)$. The augmented Lagrangian
$L_c$ satisfies the following properties.
\begin{itemize}
 \item[{\rm (a)}] $L_c$ is finite for all $x\in X$ and for all $\lambda \in H$.
 \item[{\rm (b)}] $L_c$ is convex and continuously differentiable with respect to $x$, and is concave and continuously
 differentiable with respect to $\lambda$. Further, for all $(x,\lambda)\in X\times H$ and for all $c>0$, the gradients $D_xL_c$ and $D_{\lambda} L_c$ are written respectively as
\begin{align}
  & D_{x} {L}_{c}(x,\lambda) = D_x f(x) +c E^{\rm T}
   (E x + \lambda/c -\prox{\phic}(E x + \lambda/c)), \label{KKT1}\\
  & D_{\lambda}{L}_{c}(x,\lambda) = E x - \prox{\phic}(E x  + \lambda/c). \label{KKT2}
\end{align}
\item[{\rm (c)}] $D_{x}L_c(x,\lambda)$ can be expressed in terms of $D_{\lambda}L_c(x,\lambda)$ by
\begin{equation}\label{Lx}
  D_xL_{c}(x,\lambda) = D_xf(x)+E^{\rm T}\left(\lambda + cD_{\lambda}L_{c}(x,\lambda)\right).
\end{equation}
\end{itemize}
\end{proposition}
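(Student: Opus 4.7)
The plan is to work directly from the closed form
$L_c(x,\lambda)=f(x)+\phi_c(Ex+\lambda/c)-\tfrac{1}{2c}\|\lambda\|^2$
established in the introduction, and reduce each of the three claims to an application of Proposition~\ref{prop:basic}.

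For (a), I would note that any $\phi\in\Gamma_0(H)$ is minorized by a continuous affine functional on $H$, so the infimum defining $\phi_c(z)$ is bounded below for each $z\in H$; on the other hand, taking $u$ in $\mathcal{D}(\phi)$ as a trial point in the definition shows $\phi_c(z)<\infty$. Since $f$ is real-valued and $\|\lambda\|^2<\infty$, finiteness of $L_c$ on $X\times H$ follows.

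For (b), convexity in $x$ is immediate: $f$ is convex, $\phi_c$ is convex as a Moreau envelope, and $x\mapsto Ex+\lambda/c$ is affine, while the last term does not depend on $x$. Concavity in $\lambda$ is the one step that is not visible from the raw expression, since $\phi_c(Ex+\lambda/c)$ is actually convex in $\lambda$. Here I would invoke the duality identity in Proposition~\ref{prop:basic}(f) to rewrite
\begin{equation*}
\phi_c(Ex+\lambda/c)=\tfrac{c}{2}\|Ex+\lambda/c\|^2-(\phi^\ast)_{1/c}(cEx+\lambda).
\end{equation*}
Expanding $\tfrac{c}{2}\|Ex+\lambda/c\|^2=\tfrac{c}{2}\|Ex\|^2+(Ex,\lambda)+\tfrac{1}{2c}\|\lambda\|^2$ cancels the quadratic-in-$\lambda$ term in $L_c$ and leaves
\begin{equation*}
L_c(x,\lambda)=f(x)+\tfrac{c}{2}\|Ex\|^2+(Ex,\lambda)-(\phi^\ast)_{1/c}(cEx+\lambda),
\end{equation*}
which is concave in $\lambda$ because $(\phi^\ast)_{1/c}$ is convex and composed with an affine map in $\lambda$. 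The gradient formulas (\ref{KKT1}) and (\ref{KKT2}) then follow by the chain rule combined with the identity $D_z\phi_c(z)=c(z-\prox{\phic}(z))$ of Proposition~\ref{prop:basic}(d); in the $\lambda$-derivative, the factor $1/c$ from the inner derivative multiplies with the $c$ in $D_z\phi_c$ and, after adding the contribution $-\lambda/c$ coming from $-\tfrac{1}{2c}\|\lambda\|^2$, the $\lambda/c$ term cancels to yield $D_\lambda L_c=Ex-\prox{\phic}(Ex+\lambda/c)$. Continuity of both gradients is inherited from the $c$-Lipschitz continuity of $D_z\phi_c$ in Proposition~\ref{prop:basic}(e).

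Part (c) is then a short algebraic rearrangement: from (\ref{KKT1}) and (\ref{KKT2}),
\begin{equation*}
cE^{\rm T}\bigl(Ex+\lambda/c-\prox{\phic}(Ex+\lambda/c)\bigr)=cE^{\rm T}D_\lambda L_c(x,\lambda)+E^{\rm T}\lambda,
\end{equation*}
so substituting into (\ref{KKT1}) produces (\ref{Lx}).

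The only genuinely subtle point is the concavity-in-$\lambda$ claim; it is what forces the detour through the Moreau-conjugate identity of Proposition~\ref{prop:basic}(f). Everything else — finiteness, the gradient formulas, and the relation between $D_xL_c$ and $D_\lambda L_c$ — is a direct bookkeeping exercise once the closed-form expression for $L_c$ is in hand.
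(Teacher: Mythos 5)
Your proposal is correct and follows essentially the same route as the paper, whose proof is a one-line appeal to the differentiability and convexity of $f$ together with Proposition~\ref{prop:basic}; you have simply filled in the details, and all of your computations (finiteness, the two gradient formulas, and the rearrangement for (c)) check out. The only stylistic remark is that the concavity in $\lambda$, which you obtain via the conjugate identity of Proposition~\ref{prop:basic}(f), can also be read off directly from the definition $L_c(x,\lambda)=f(x)+\min_{u}\bigl(\phi(u)+(\lambda,Ex-u)+\tfrac{c}{2}\Vert Ex-u\Vert^2\bigr)$ as a pointwise infimum of functions affine in $\lambda$, so the detour, while perfectly valid, is not forced.
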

\begin{proof}
All the assertions follow directly from the differentiability and convexity of $f$, and Proposition~\ref{prop:basic}.
\end{proof}
\begin{theorem}\label{thm:sol_multi}
Let ${c}>0$, $f$ be convex and continuously differentiable, and $\phi\in\Gamma_0(H)$. The following
conditions on a pair $(\bar{x},\bar{\lambda})$ are equivalent.
\begin{itemize}
 \item[{\rm (a)}] (optimality system) A pair $(\bar{x},\bar \lambda) \in X\times H$ satisfies the optimality system
\begin{align}\label{eqn:extremal}
D_xf(\bar{x}) + E^{{\rm T}} \bar{\lambda} = 0 \quad \mbox{and}\quad \bar{\lambda}\in \partial\phi(E\bar{x}).
\end{align}
\item[{\rm (b)}] (Lagrange optimality system) A pair $(\bar{x},\bar\lambda) \in X\times H$ satisfies the Lagrange optimality system
\begin{equation}\label{KKT}
  D_x L_{c}(\bar{x},\bar{\lambda})=0\quad\mbox{and}\quad
  D_{\lambda} L_{c}(\bar{x},\bar{\lambda})=0,
\end{equation}
where the gradients of $L_c$ with respect to $x$ and $\lambda$ are given by~\eqref{KKT1} and~\eqref{KKT2}, respectively.
More precisely, $(\bar{x},\bar{\lambda})$ satisfies the nonlinear system:
\begin{equation*}
\left\{\begin{array}{l}
  D_x f(x) + cE^{\rm T}
\left(E x + \lambda/c -\prox{\phic}(E x + \lambda/c)\right)=0\\[5pt]
 E x - \prox{\phic}(E x  +\lambda/c)=0.
\end{array} \right.
\end{equation*}
\item[{\rm (c)}](saddle point) A pair $(\bar{x},\bar\lambda)\in X\times H$ is a saddle point of $L_{c}$:
\begin{equation}\label{eqn:saddle}
L_c(\bar{x},{\lambda}) \le L_c(\bar{x},\bar{\lambda}) \le
L_c(x,\bar{\lambda}),\qquad \forall x\in X,\; \forall\lambda \in H.
\end{equation}
\end{itemize}
\end{theorem}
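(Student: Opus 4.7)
The natural route is to establish the circle (b) $\Leftrightarrow$ (a) and (b) $\Leftrightarrow$ (c), using as ingredients Proposition~\ref{prop:proximity2} (which identifies the proximity-operator fixed-point equation with a subdifferential inclusion) and Proposition~\ref{prop:KKT} (which gives the closed-form expressions of $D_xL_c$ and $D_\lambda L_c$, the identity~\eqref{Lx}, and the joint convex/concave structure of $L_c$). No further regularity of $\phi$ is required, so the proof is essentially a bookkeeping argument on top of these two propositions.

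\textbf{Step 1: (a) $\Leftrightarrow$ (b).} Treat the two equations of~\eqref{KKT} one at a time. The dual equation is, by~\eqref{KKT2}, $E\bar x-\prox{\phic}(E\bar x+\bar\lambda/c)=0$; applying the equivalence (b) $\Leftrightarrow$ (a) of Proposition~\ref{prop:proximity2} with $z=E\bar x$ and multiplier $\bar\lambda$ shows that this is exactly $\bar\lambda\in\partial\phi(E\bar x)$. Once this holds, the identity~\eqref{Lx} collapses to $D_xL_c(\bar x,\bar\lambda)=D_xf(\bar x)+E^{\rm T}\bar\lambda$, so the primal equation $D_xL_c(\bar x,\bar\lambda)=0$ becomes the remaining equation $D_xf(\bar x)+E^{\rm T}\bar\lambda=0$ in~\eqref{eqn:extremal}. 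Both implications follow by reading this chain in either direction.

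\textbf{Step 2: (b) $\Leftrightarrow$ (c).} This is the standard correspondence between critical points and saddle points for a convex-concave, componentwise differentiable function, which applies here thanks to Proposition~\ref{prop:KKT}(b). For (c) $\Rightarrow$ (b), fix $\bar\lambda$ and apply Fermat's rule to the convex differentiable map $x\mapsto L_c(x,\bar\lambda)$ minimized at $\bar x$, and symmetrically for the concave map $\lambda\mapsto L_c(\bar x,\lambda)$ maximized at $\bar\lambda$. For (b) $\Rightarrow$ (c), use the first-order characterization of convexity: since $D_xL_c(\bar x,\bar\lambda)=0$, the convex inequality $L_c(x,\bar\lambda)\ge L_c(\bar x,\bar\lambda)+\langle D_xL_c(\bar x,\bar\lambda),x-\bar x\rangle_{X^\ast,X}$ gives the right inequality in~\eqref{eqn:saddle} globally; the concave counterpart in $\lambda$ gives the left inequality.

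\textbf{Where the argument must be careful.} The only delicate point is that the saddle inequalities must hold globally on $X\times H$, not just infinitesimally. This is precisely the step that uses the full convex/concave structure of $L_c$ (as opposed to mere differentiability at $(\bar x,\bar\lambda)$), so one has to invoke the first-order characterization of convex minima rather than just setting derivatives to zero. Everything else is a direct substitution via Propositions~\ref{prop:proximity2} and~\ref{prop:KKT}, and no estimate, compactness, or limiting argument is needed.
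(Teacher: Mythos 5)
Your proposal is correct and follows essentially the same route as the paper's own proof: Proposition~\ref{prop:proximity2} converts the dual equation into the subdifferential inclusion, the identity~\eqref{Lx} then reduces the primal equation to $D_xf(\bar x)+E^{\rm T}\bar\lambda=0$, and the equivalence with the saddle point condition is obtained from the first-order characterization of convexity/concavity in one direction and Fermat's rule in the other. No substantive difference from the paper's argument.
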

\begin{proof}
First we show the equivalence between {\rm (a)} and {\rm (b)}. Suppose that {\rm (a)} holds.
The inclusion $\lambda\in \partial\phi(E x)$ is equivalent to the equation $E x-\prox{\phic}
(E x + \lambda/c)=0$ by Propostion~\ref{prop:proximity2}. Hence, from~\eqref{KKT2} we have
\begin{equation*}
D_{\lambda}L_{{c}}(\bar{x},\bar{\lambda})=E\bar{x} - \prox{\phic}(E \bar{x}+\bar{\lambda}/c) = 0.
\end{equation*}
Thus
\begin{align*}
D_{x}L_{c}(\bar{x},\bar{\lambda})
= D_x f(\bar{x}) + E^{{\rm T}}\left(\bar{\lambda}+c D_{\lambda}L_{c}(\bar{x},\bar{\lambda})\right)
= D_x f(\bar{x}) + E^{\rm T} \bar{\lambda} = 0,
\end{align*}
by Proposition~\ref{prop:KKT}{\rm (c)}.
Similarly, we can show that {\rm (b)} implies {\rm (a)}.

Next we show the equivalence between {\rm (b)} and {\rm (c)}. If $(\bar{x},\bar{\lambda})$ satisfies the Lagrange optimality system, then from the
convexity of $L_{c}(x,\lambda)$ with respect to $x$, we have
\begin{equation*}
L_{c}(x,\bar{\lambda})-L_{c}(\bar{x},\bar{\lambda}) \ge
\langle D_x L_{c}(\bar{x}, \bar{\lambda} ),  x -\bar{x} \rangle_{X^\ast,X}=0\quad \forall x \in X.
\end{equation*}
Similarly, by the concavity of $L_{c}(x,\cdot)$, we deduce $L_{c}(\bar{x},{\lambda})\le
L_{c}(\bar{x},\bar{\lambda})$.

Conversely, suppose that $(\bar{x},\bar{\lambda})$ is a saddle point. 
The second inequality indicates that $\bar{x}$ is a minimizer of the function $L_{c}(\cdot,\bar{\lambda})$, which implies that $D_xL_{c}(\bar{x},\bar{\lambda}) = 0$.
The similar argument shows that $D_{\lambda}L_{c}(\bar{x}, \bar{\lambda}) = 0$.
\end{proof}

\begin{corollary}
If one of the conditions in Theorem~\ref{thm:sol_multi} holds, then $\bar{x}$ is a solution of problem~\eqref{min}.
\end{corollary}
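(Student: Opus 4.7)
The plan is to invoke condition (a) of Theorem~\ref{thm:sol_multi}, which is the cleanest of the three equivalent formulations for this purpose, and then use convexity directly. By the theorem, it suffices to show that if $(\bar x,\bar\lambda)\in X\times H$ satisfies the standard optimality system $D_xf(\bar x)+E^{\rm T}\bar\lambda=0$ and $\bar\lambda\in\partial\phi(E\bar x)$, then $\bar x$ minimizes $f+\phi\circ E$ over $X$.

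First I would fix an arbitrary $x\in X$ and write down the two convexity inequalities that the hypotheses provide. Since $f$ is convex and differentiable, $f(x)\ge f(\bar x)+\langle D_xf(\bar x),x-\bar x\rangle_{X^*,X}$. Since $\bar\lambda\in\partial\phi(E\bar x)$, the subgradient inequality gives $\phi(Ex)\ge\phi(E\bar x)+(\bar\lambda,Ex-E\bar x)$. Then I would add the two inequalities and rewrite the last inner product using the adjoint as $(\bar\lambda,E(x-\bar x))=\langle E^{\rm T}\bar\lambda,x-\bar x\rangle_{X^*,X}$, so the linear terms combine into $\langle D_xf(\bar x)+E^{\rm T}\bar\lambda,x-\bar x\rangle_{X^*,X}$, which vanishes by the first equation of the optimality system.

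This yields $f(x)+\phi(Ex)\ge f(\bar x)+\phi(E\bar x)$ for every $x\in X$, which is exactly the assertion that $\bar x$ solves problem~\eqref{min}. There is no real obstacle here; the only point requiring mild care is the pairing between $X^\ast$ and $X$ when transferring $E$ to its adjoint, but the assumption that $E$ is a bounded linear operator between $X$ and $H$ makes this routine.
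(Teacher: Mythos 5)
Your proof is correct, but it follows a different route from the paper's. The paper passes through subdifferential calculus: from the optimality system it deduces $0\in D_xf(\bar{x})+E^{\rm T}\partial\phi(E\bar{x})$, invokes the inclusion $E^{\rm T}\partial\phi(Ex)\subset\partial(\phi\circ E)(x)$ (citing Ekeland--T\'emam) together with the sum rule to conclude $0\in\partial(f+\phi\circ E)(\bar{x})$, and then reads off optimality. You instead argue directly from the two first-order inequalities --- the gradient inequality for the convex differentiable $f$ and the subgradient inequality for $\phi$ at $E\bar{x}$ --- add them, move $E$ to its adjoint, and let the linear terms cancel via $D_xf(\bar{x})+E^{\rm T}\bar{\lambda}=0$. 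Your argument is more elementary and self-contained: it needs no chain rule for subdifferentials and no external citation, only the definitions. What the paper's route buys is the explicit intermediate statement $0\in\partial(f+\phi\circ E)(\bar{x})$, which is the standard characterization of minimizers and situates the corollary within the usual convex-analytic framework. One small point worth making explicit in your version: the subgradient inequality at $E\bar{x}$ presupposes $\phi(E\bar{x})<\infty$, but this is automatic since $\partial\phi(E\bar{x})\ni\bar{\lambda}$ is nonempty only at points of the effective domain. Both proofs are sound; yours is arguably cleaner for a reader who does not want to chase the reference for the chain-rule inclusion.
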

\begin{proof}
Assume that there exists a pair $(\bar{x},\bar{\lambda})$ satisfying the optimality system~\eqref{eqn:extremal}.
The system implies that $0\in D_xf(\bar{x}) + E^{\rm T}\partial\phi(E\bar{x})$.
By \cite[Chap.~1, Prop.~5.7]{Ekeland+Temam-Convanalvariprob:99}) we have
\begin{equation*}
E^{\rm T}\partial\phi(E x)\subset \partial(\phi\circ E)(x),\quad \forall x\in X.
\end{equation*}
Therefore it follows that
\begin{align*}
  0\in D_xf(\bar{x}) + E^{\rm T}\partial\phi(E\bar{x})\subset D_x f(\bar{x}) +
\partial(\phi\circ E)(\bar{x}) = \partial(f+\phi\circ E)(\bar{x}),
\end{align*}
which shows that $\bar{x}$ is a solution of the minimization problem~\eqref{min}.
\end{proof}
\begin{remark}
We refer to \cite[Chap.~4]{Ito+Kunisch-Lagrmultapprvari:08} for a sufficient condition for the existence of a pair satisfying the optimality system~\eqref{eqn:extremal}.\end{remark}
\begin{corollary}
The Lagrange optimality system can also be written as
\begin{equation}\label{KKT3}
  D_xf(\bar{x}) + E^{\rm T} \bar{\lambda}=0 \quad\mbox{and}\quad
  E \bar{x} -\prox{\phic}(E \bar{x}+\bar{\lambda}/c)=0.
\end{equation}
\end{corollary}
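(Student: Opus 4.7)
The plan is to observe that \eqref{KKT3} is simply \eqref{KKT} with the first equation rewritten once the second equation is known to hold, so the corollary follows from the identity \eqref{Lx} of Proposition~\ref{prop:KKT}(c).

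First I would note that, by the gradient formula \eqref{KKT2}, the condition $D_\lambda L_c(\bar x,\bar\lambda)=0$ is identical to $E\bar x - \prox{\phic}(E\bar x+\bar\lambda/c)=0$. Hence the second equation of \eqref{KKT3} is the same as the second equation of the Lagrange optimality system \eqref{KKT}; equivalently, it characterizes the $\lambda$-stationarity of $L_c$ at $(\bar x,\bar\lambda)$.

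Next I would invoke the representation \eqref{Lx},
\begin{equation*}
 D_x L_c(\bar x,\bar\lambda) = D_x f(\bar x) + E^{\rm T}\bigl(\bar\lambda + c D_\lambda L_c(\bar x,\bar\lambda)\bigr).
\end{equation*}
Once $D_\lambda L_c(\bar x,\bar\lambda)=0$ is established, this identity collapses to $D_x L_c(\bar x,\bar\lambda) = D_x f(\bar x)+E^{\rm T}\bar\lambda$, so the first equation of \eqref{KKT} is equivalent to $D_x f(\bar x)+E^{\rm T}\bar\lambda=0$, the first equation of \eqref{KKT3}. Running the argument in the opposite direction, if \eqref{KKT3} holds then the second equation gives $D_\lambda L_c(\bar x,\bar\lambda)=0$ via \eqref{KKT2}, and substitution into \eqref{Lx} gives $D_x L_c(\bar x,\bar\lambda)=D_x f(\bar x)+E^{\rm T}\bar\lambda=0$, recovering \eqref{KKT}.

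There is really no main obstacle here; the statement is a direct algebraic reformulation enabled by \eqref{Lx}, and no further use of convexity, the Moreau envelope identities, or Proposition~\ref{prop:proximity2} is required beyond what has already been packaged into Proposition~\ref{prop:KKT}.
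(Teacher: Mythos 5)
Your argument is correct and matches the paper's proof, which likewise derives the corollary directly from Proposition~\ref{prop:KKT}, the gradient formula \eqref{KKT2}, and the identity \eqref{Lx}; you have simply spelled out the two-line substitution that the paper leaves implicit.
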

\begin{proof}
It follows directly from Proposition~\ref{prop:KKT},~\eqref{KKT2} and~\eqref{Lx}.
\end{proof}
The Lagrange optimality system~\eqref{KKT} is closely related to the optimality system derived in
\cite{Ito+Kunisch-AugmLagrmethnons:00,Ito+Kunisch-actistrabaseaugm:99} which is given by using \textit{the generalized Moreau-Yosida
approximation} $\psi_c(z,\lambda)$ defined by
 \begin{equation*}
 \psi_c(z,\lambda) = \phi_c(z+\lambda/c)-\tfrac{1}{2c}\|\lambda \|^2.
\end{equation*}
Let us assume that a pair $(\bar{x},\bar{\lambda})\in X\times Z$ satisfies the optimality system~\eqref{eqn:extremal}.
It is shown in \cite[Thm.~4.5]{Ito+Kunisch-AugmLagrmethnons:00} that the pair satisfies the following
optimality condition for every $c>0$.
\begin{equation*}
 \bar{x} =  \min_{x}L_{c}(x,\bar{\lambda}) \quad \mbox{and}\quad
  \bar{\lambda}  = (D_{x}\psi_c)(E \bar{x},\bar{\lambda}). 
\end{equation*}
The first relation implies the inequality $L_c(\bar{x},\bar{\lambda})
\le L_c(x,\bar{\lambda})$ for all $x\in X$, which is the second inequality of~\eqref{eqn:saddle}. Meanwhile,
by the definition of $\psi_c(x,\lambda)$ and Proposition~\ref{prop:basic}{\rm (d)}, we have
\begin{equation*}
  \begin{aligned}
  (D_x\psi_c)(E x,\lambda)& = \phi^{\prime}_c(E x + \lambda/c)\\
   &= c( E x +\lambda/c -\prox{\phic}(E x +\lambda/c ))\\
   &=\lambda + c( E x -\prox{\phic}(E x + \lambda/c)).
 \end{aligned}
\end{equation*}
In view  of the expression~\eqref{KKT2}, the second relation implies $D_{\lambda}L_{c}(\bar{x},\bar{\lambda})=0$,
which is the second equation of the Lagrange optimality system~\eqref{KKT}.
Alternatively, the following optimality condition in the form of equation is given in \cite{Ito+Kunisch-actistrabaseaugm:99}:
\begin{equation*}
\begin{aligned}
  D_xf(\bar{x}) + E^{\rm T} \bar{\lambda} = 0 \quad \mbox{and}\quad
  \bar{\lambda} =  (D_x\psi_c)(E \bar{x},\bar{\lambda}).
\end{aligned}
\end{equation*}
Similarly, one can show that this optimality system is equivalent to~\eqref{KKT3}.
\section{Linear Newton method for the Lagrange optimality system}\label{sec:newton}
In this section, we present a linear Newton method for the nonsmooth optimization problem~\eqref{min} on the basis of the Lagrange optimality system.
We also illustrate the method
on {two} elementary examples. 
To keep the presentation simple, we restrict our discussions to finite-dimensional spaces.
\subsection{Linear Newton method}
We begin with the concept of linear Newton approximation, which provides a building block for designing
Newton type algorithms for problem~\eqref{min}.
For a comprehensive treatment and for further references on the subject one may refer to \cite{Facchinei+Pang-Finivariineqcomp:03a}.
\begin{definition}\label{def:linear Newton}
Let $\Phi\colon \R^m\rightarrow \R^n$ be locally Lipschitz continuous. We say that the map $\Phi$ admits a \textit{linear Newton
approximation (LNA)} at $\bar{\xi}\in\R^m$ if there exists a set-valued map $T\colon \R^m\rightrightarrows \R^{n\times m}$
such that:
\begin{enumerate}
  \item[{\rm (a)}] The set of matrices $T(\xi)$ is nonempty and compact for each $\xi\in\R^m$;
  \item[{\rm (b)}] $T$ is upper semicontinuous at $\bar{\xi}$;
  \item[{\rm (c)}] The following limit holds:
  \begin{equation*}
    \lim_{\substack{\bar{\xi}\neq \xi \rightarrow \bar{\xi}\\ V\in T(\xi) }}
    \frac{ \Vert \Phi(\xi) + V(\bar{\xi}-\xi) - \Phi(\bar{\xi})\Vert}{\Vert \xi -\bar{\xi}\Vert} = 0.
\end{equation*}
\end{enumerate}
We also say that $T$ is a \textit{linear Newton approximation scheme} of $\Phi$.
\end{definition}
A linear Newton iteration for solving the nonlinear equation $\Phi(\xi)=0$ is defined by
\begin{equation}\label{gn}
     \xi^{k+1}= \xi^{k} - V^{-1}_k\Phi(\xi^k) , \mbox{ with }V_k \in T(\xi^k).
\end{equation}
The local convergence of the iterate is ensured if the matrix $V_k$ is nonsingular for all $k$.
\begin{theorem}[\protect{\cite[Thm.~7.5.15]{Facchinei+Pang-Finivariineqcomp:03a}
}]\label{thm:converge}
Let $\Phi \colon \R^n\rightarrow \R^n$ be locally Lipschitz continuous and admit a LNA
$T$ at $\xi^\ast\in \R^n$ such that $\Phi(\xi^\ast)=0$. If every matrix
$V\in T(\xi^\ast)$ is nonsingular, then the iterate~\eqref{gn} converges superlinearly
to the solution $\xi^\ast$ provided that $\xi^0$ is sufficiently close to $\xi^\ast$.
\end{theorem}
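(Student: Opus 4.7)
The plan is to adapt the classical local convergence proof for semismooth Newton methods to the LNA setting. I would first establish a \emph{uniform nonsingularity and boundedness} statement near $\xi^{\ast}$: since $T$ is upper semicontinuous at $\xi^{\ast}$ with $T(\xi^{\ast})$ a nonempty compact set consisting entirely of nonsingular matrices, there should exist constants $r>0$ and $M>0$ such that every $V\in T(\xi)$ with $\|\xi-\xi^{\ast}\|\le r$ is nonsingular with $\|V^{-1}\|\le M$. The argument would proceed by contradiction: if no such constants existed, one could extract sequences $\xi_j\to\xi^{\ast}$ and $V_j\in T(\xi_j)$ with $V_j$ either singular or with $\|V_j^{-1}\|\to\infty$; upper semicontinuity combined with compactness of $T(\xi^{\ast})$ would then yield a limit point $V_\infty\in T(\xi^{\ast})$ whose nonsingularity contradicts the assumed behaviour.

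Granted this uniform estimate, I would carry out the standard Newton error-recursion manipulation. Assuming $\|\xi^k-\xi^{\ast}\|\le r$, the inverse $V_k^{-1}$ exists, and using $\Phi(\xi^{\ast})=0$ one obtains
\begin{equation*}
\xi^{k+1}-\xi^{\ast} = \xi^k-\xi^{\ast} - V_k^{-1}\Phi(\xi^k)
= -V_k^{-1}\bigl(\Phi(\xi^k)+V_k(\xi^{\ast}-\xi^k)-\Phi(\xi^{\ast})\bigr).
\end{equation*}
The bound $\|V_k^{-1}\|\le M$ together with part {\rm (c)} of Definition~\ref{def:linear Newton} then delivers
\begin{equation*}
\|\xi^{k+1}-\xi^{\ast}\|\le M\,\|\Phi(\xi^k)+V_k(\xi^{\ast}-\xi^k)-\Phi(\xi^{\ast})\| = o(\|\xi^k-\xi^{\ast}\|),
\end{equation*}
which is exactly the Q-superlinear contraction sought.

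A short inductive well-definedness argument then closes the loop: by shrinking $r$ if necessary so that the $o(\cdot)$ term is at most $\tfrac{1}{2}\|\xi^k-\xi^{\ast}\|$ whenever $\|\xi^k-\xi^{\ast}\|\le r$, the sequence remains inside the ball of radius $r$ centred at $\xi^{\ast}$, converges to $\xi^{\ast}$, and does so at the claimed superlinear rate. I expect the main technical obstacle to lie in the first step, the propagation of nonsingularity from $\xi^{\ast}$ to a neighborhood: upper semicontinuity of a set-valued map is genuinely weaker than continuity of a matrix function, and one must combine it carefully with compactness of $T(\xi^{\ast})$ and with the openness of the nonsingularity condition on $\R^{n\times n}$ to extract a single uniform bound $M$ that controls all inverses simultaneously.
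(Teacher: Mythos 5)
The paper offers no proof of this statement; it is quoted verbatim from \cite[Thm.~7.5.15]{Facchinei+Pang-Finivariineqcomp:03a}. Your argument is correct and is essentially the standard proof of that cited theorem: the uniform bounded-invertibility lemma near $\xi^{\ast}$ (obtained from upper semicontinuity of $T$, compactness of $T(\xi^{\ast})$, and openness of the set of nonsingular matrices), followed by the error recursion $\xi^{k+1}-\xi^{\ast}=-V_k^{-1}\bigl(\Phi(\xi^k)+V_k(\xi^{\ast}-\xi^k)-\Phi(\xi^{\ast})\bigr)$ combined with property {\rm (c)} of Definition~\ref{def:linear Newton}, whose joint limit over $\xi\to\xi^{\ast}$ and $V\in T(\xi)$ is exactly what makes the $o(\|\xi^k-\xi^{\ast}\|)$ bound uniform in the choice of $V_k$. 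No gaps.
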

In addition to the Newton iteration~\eqref{gn} we can also define inexact version of linear Newton methods, the Levenberg-Marquardt (LM) method and the inexact version of LM method, and
establish their local convergence as well as characterize their convergence rate, see. e.g., \cite{Facchinei+Pang-Finivariineqcomp:03a}.
The linear Newton method for the Lagrange optimality system, which we shall develop later in the section, can be extended for these methods along similar lines,
but we restrict ourselves to the basic Newton method~\eqref{gn}.

To provide a class of Lipschitz maps that admit a LNA, we shall make use of the notion of generalized Jacobian and semismoothness.
Let $\Phi \colon \R^m\rightarrow \R^n$ be a locally Lipschitz continuous map. Rademacher's
Theorem \cite[Sect.~3.1.2]{Evans+Gariepy-Meastheofineprop:92} states that a locally continuous map is
differentiable almost everywhere. Denote by $N_{\Phi}$ a set of measure zero such that $\Phi$ is differentiable
on $\R^m\setminus N_{\Phi}$. The \textit{limiting Jacobian} of $\Phi$ at $\xi$ is the set
\begin{equation*}
\partial_B \Phi(\xi) := \left\{G \in \R^{n\times m} \mid  \exists \{\xi^k\} \subset \R^m\setminus N_{\Phi} \mbox{ with } \xi^k\rightarrow \xi,
D_x\Phi(\xi^k) \rightarrow G\right\}.
\end{equation*}
The \textit{(Clarke's) generalized Jacobian} $\partial\Phi(\xi)$ of $\Phi$ at $\xi \in\R^m$ is the convex hull of the
limiting Jacobian:
\begin{equation*}
\partial \Phi(\xi) = \mbox{conv}(\partial_B\Phi(\xi)).
\end{equation*}
We denote by $\partial_B \Phi$ the set valued map $\xi \rightarrow \partial_B\Phi(\xi)$ for $\xi\in\R^m$. The set valued map $\partial\Phi$ for the generalized Jacobian is defined analogously.

A possible choice for a LNA scheme of a locally Lipschitz map is the limiting or generalized Jacobian of the map.
This attempt, in the absence of additional assumption on $\Phi$, is doomed
because both of them do not necessarily satisfy the approximation property of condition {\rm (c)} in Definition~\ref{def:linear Newton}.
This drawback can be ameliorated by employing the notion of semismoothness, which narrows down the class of Lipschitz maps so that
each of $\partial \Phi$ and $\partial_B\Phi$ provides a LNA scheme of the map.  
\begin{definition}
Let $\Phi \colon \R^m\rightarrow \R^n$ be a locally Lipschitz map. We say that $\Phi$ is \textit{semismooth} at $\bar{\xi} \in\R^m$ if $\Phi$ is directionally differentiable near $\bar{\xi}$ and the following limit holds:
\begin{equation*}
\lim_{
\bar{\xi}\neq \xi \rightarrow \bar{\xi} \rightarrow 0
}
\frac{ \Vert \Phi^\prime(\xi;\xi-\bar{\xi}) - \Phi^\prime(\bar{\xi};\xi-\bar{\xi}) \Vert}{\Vert \xi -\bar{\xi}\Vert} = 0,
\end{equation*}
where $\Phi^\prime(\xi;h)$ denotes the directional derivative of $\Phi$ at $\xi\in \R^m$ along the direction $h\in \R^m$.
\end{definition}

\begin{proposition}\label{B_LNA}
Assume that a locally Lipschitz map $\Phi \colon \R^m\rightarrow \R^n$ is semismooth at $\xi \in \R^m$, then
each of $\partial \Phi$ and $\partial_B \Phi$ defines a LNA scheme of $\Phi$ at $\xi$.
\end{proposition}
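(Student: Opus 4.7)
The plan is to verify the three clauses of Definition~\ref{def:linear Newton} in order, first for $T=\partial_B\Phi$ and then to transfer the conclusion to $T=\partial\Phi=\mathrm{conv}(\partial_B\Phi)$. Clauses (a) and (b) are structural facts about limiting and Clarke Jacobians of locally Lipschitz maps and do not use the semismoothness hypothesis; only clause (c) does.

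For clause (a), I would invoke Rademacher's theorem so that $\Phi$ is differentiable off the null set $N_\Phi$, use the local Lipschitz constant of $\Phi$ as a uniform bound on $\|D_x\Phi(\cdot)\|$ in a neighborhood of $\xi$, and extract cluster points via Bolzano--Weierstrass to obtain nonemptiness of $\partial_B\Phi(\xi)$; closedness follows by a diagonal extraction in the defining limit, and boundedness from the Lipschitz bound, giving compactness. Then $\partial\Phi(\xi)$ is compact as the convex hull of a compact subset of $\R^{n\times m}$. For clause (b), upper semicontinuity of $\partial_B\Phi$ at $\bar\xi$ is immediate from the definition: if $V^j\in\partial_B\Phi(\xi^j)$ with $\xi^j\to\bar\xi$ and $V^j\to V$, then a diagonal sequence of differentiability points exhibits $V$ as a limit of derivatives at points converging to $\bar\xi$. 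Upper semicontinuity of $\partial\Phi$ then follows via Carath\'{e}odory's theorem applied to the convex hull.

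Clause (c) is the crux. Writing $h=\xi-\bar\xi$, the target estimate for $T=\partial_B\Phi$ is $\Phi(\bar\xi+h)-\Phi(\bar\xi)-Vh=o(\|h\|)$ for $V\in\partial_B\Phi(\bar\xi+h)$, which I would split as
\begin{equation*}
\bigl[\Phi(\bar\xi+h)-\Phi(\bar\xi)-\Phi'(\bar\xi;h)\bigr]+\bigl[\Phi'(\bar\xi;h)-\Phi'(\bar\xi+h;h)\bigr]+\bigl[\Phi'(\bar\xi+h;h)-Vh\bigr].
\end{equation*}
The first bracket is $o(\|h\|)$ by directional differentiability at $\bar\xi$ combined with local Lipschitz continuity, which yields uniformity on the unit sphere by a standard compactness argument. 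The second bracket is exactly the hypothesis of semismoothness applied with $\xi=\bar\xi+h$ so that $\xi-\bar\xi=h$. The third bracket is the delicate one: for $V=\lim_k D_x\Phi(\eta^k)$ with differentiability points $\eta^k\to\bar\xi+h$, the identity $\Phi'(\eta^k;h)=D_x\Phi(\eta^k)h$ combined with the semismoothness relation at each $\eta^k$ shows that $Vh$ approximates $\Phi'(\bar\xi+h;h)$ to order $o(\|h\|)$ after careful diagonalization, the Lipschitz equicontinuity of the directional derivative in the direction variable being the tool that keeps the estimate uniform. Coordinating this inner limit with the outer $h\to 0$ limit is the main technical obstacle of the proof.

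The transfer to $T=\partial\Phi$ is then immediate. Any $V\in\partial\Phi(\xi)$ is a convex combination $V=\sum_i t_i V_i$ with $V_i\in\partial_B\Phi(\xi)$, so linearity of the relevant residual in $V$ gives
\begin{equation*}
\|\Phi(\xi)+V(\bar\xi-\xi)-\Phi(\bar\xi)\|\le\sup_{W\in\partial_B\Phi(\xi)}\|\Phi(\xi)+W(\bar\xi-\xi)-\Phi(\bar\xi)\|,
\end{equation*}
and the right-hand side is $o(\|\xi-\bar\xi\|)$ by the case just established for $\partial_B\Phi$, so clause (c) is inherited by $\partial\Phi$ without any further work.
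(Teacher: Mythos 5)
Your proposal is correct in substance, but it takes a genuinely different route from the paper: the paper's ``proof'' is purely by citation, invoking \cite[Prop.~7.1.4]{Facchinei+Pang-Finivariineqcomp:03a} for clauses (a) and (b), \cite[Thm.~7.4.3]{Facchinei+Pang-Finivariineqcomp:03a} for clause (c), and \cite[Prop.~7.5.16]{Facchinei+Pang-Finivariineqcomp:03a} for the limiting Jacobian, whereas you reconstruct the underlying argument from first principles --- essentially the Qi--Sun characterization of semismoothness that those cited results encapsulate. Your three-bracket decomposition of the residual is the standard one, and the ingredients you name (Shapiro-type B-differentiability from local Lipschitz continuity plus directional differentiability for the first bracket, the defining limit of semismoothness for the second, and the link between $\partial_B\Phi$ at nearby points and directional derivatives for the third) are exactly what is needed; the reduction from $\partial\Phi$ to $\partial_B\Phi$ by convex combinations and Carath\'{e}odory is also fine. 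What your write-up buys is self-containedness; what the paper's buys is brevity. One spot in your sketch needs tightening: in the third bracket you speak of ``the semismoothness relation at each $\eta^k$,'' but semismoothness is hypothesized only at $\bar\xi$, so what you must actually use is the defining limit at $\bar\xi$ tested along the differentiability points $\eta^k$, which controls $\Phi'(\eta^k;\eta^k-\bar\xi)-\Phi'(\bar\xi;\eta^k-\bar\xi)$ in the \emph{displacement} direction $\eta^k-\bar\xi$ rather than in the fixed direction $h$; one then passes from $\eta^k-\bar\xi$ to $h$ using the Lipschitz bound $\Vert\Phi'(\eta;u)-\Phi'(\eta;v)\Vert\le L\Vert u-v\Vert$ and the convergence $\eta^k-\bar\xi\to h$. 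With that reading your outline closes correctly, but as literally phrased it could be mistaken for assuming semismoothness at the points $\eta^k$, which is not available.
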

\begin{proof}
It follows from \cite[Prop.~7.1.4]{Facchinei+Pang-Finivariineqcomp:03a} that the set valued map $\partial \Phi $ satisfies the condition {\rm (a)} and {\rm (b)} of Definition~\ref{def:linear Newton}, while, from \cite[Thm.~7.4.3]{Facchinei+Pang-Finivariineqcomp:03a}, the map satisfies the condition {\rm (c)}. We refer the proof for the limiting Jacobian to \cite[Prop.~7.5.16]{Facchinei+Pang-Finivariineqcomp:03a}.
\end{proof}
\subsection{Linear Newton method for the Lagrange optimality system}
We are ready to present a Newton algorithm for the Lagrange optimality system. Let the map $\Phi_c\colon \R^n\times \R^m \rightarrow \R^{n+m}$ be defined by
\begin{equation*}
  \Phi_c(x,\lambda)=   \begin{bmatrix}
   D_x L_{c}(x,\lambda)\\    D_{\lambda}L_{c}(x,\lambda)
  \end{bmatrix}.
\end{equation*}
Proposition~\ref{prop:KKT} shows that the map $\Phi_c$ is the difference of a smooth and nonsmooth part
\begin{equation*}
  \Phi_c(x,\lambda)  = \Phi_s(x,\lambda)   - \Phi_{ns}(x,\lambda),
\end{equation*}
where 
\begin{equation*}
  \Phi_s(x,\lambda):=
  \begin{bmatrix}
   D_x f(x) + cE^{\rm T} E x  +E^{\rm T} \lambda\\
   E x
  \end{bmatrix}
  \quad \mbox{and} \quad
  \Phi_{ns}(x,\lambda)=
  \begin{bmatrix}
   cE^{\rm T}\prox{\phic}(E x +\lambda/c) \\
  \prox{\phic}(E x +\lambda/c)
  \end{bmatrix}   .
\end{equation*}
The Jacobian of $\Phi_{s}(x,\lambda)$ is
\begin{equation*}
D_{x,\lambda}\Phi_{s}(x,\lambda) =
  \left[
    \begin{array}{c;{2pt/2pt}c}
      D^2_x f(x) + cE^{\rm T} E  & E^{\rm T}  \\ \hdashline[2pt/2pt]
   E & 0
    \end{array}
\right],
\end{equation*}
and the (matrix valued) map $D_{x,\lambda}\Phi_s$ defines a LNA scheme of the smooth map $\Phi_s$ at every point $(x,\lambda)$.
By the sum rule (see, e.g., \cite[Thm.~7.5.18]{Facchinei+Pang-Finivariineqcomp:03a}), a LNA scheme of
$\Phi_c$ is provide by $T= D_{x,\lambda}\Phi_{s} - T_{ns}$ where
$T_{ns}$ is a LNA scheme of $\Phi_{ns}$. 
The next result shows that the task of determining $T_{ns}$ is reduced to the one of computing a LNA scheme of the proximity operator.
\begin{lemma}\label{lem:linearNewton_prox0}
Let $\phi\in \Gamma_0(\R^m)$ and $c>0$. Let $T_{p}$ be a LNA scheme of the proximity operator $\prox{\phic}$. Then the set-valued map
\begin{equation*}
 T_{ns}(x,\lambda):= \left\{
 \begin{bmatrix}
   cE^{\rm T} \\ I
  \end{bmatrix}
 G\begin{bmatrix}
 E  &  c^{-1}I
 \end{bmatrix}
 \mid
 G \in T_p(E x + \lambda/c) \right\} \subset \R^{n+m,n+m}
\end{equation*}
is a LNA of the map $\Phi_{ns}$.
\end{lemma}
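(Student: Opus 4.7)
The plan is to recognize $\Phi_{ns}$ as a composition of the proximity operator with two constant linear maps and then verify the three defining properties of a linear Newton approximation directly. Let $A:=\begin{bmatrix}cE^{\rm T}\\ I\end{bmatrix}\in\R^{(n+m)\times m}$ and $B:=\begin{bmatrix}E & c^{-1}I\end{bmatrix}\in\R^{m\times(n+m)}$, and write $\xi:=(x,\lambda)\in\R^{n+m}$. Then
\begin{equation*}
\Phi_{ns}(\xi)=A\,\prox{\phic}(B\xi),\qquad T_{ns}(\xi)=\{\,A\,G\,B \mid G\in T_{p}(B\xi)\,\}.
\end{equation*}
Since $A,B$ are constant, one can either invoke the chain rule for LNAs together with the sum rule \cite[Thm.~7.5.18]{Facchinei+Pang-Finivariineqcomp:03a}, or verify conditions (a)--(c) of Definition~\ref{def:linear Newton} from scratch. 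I would take the direct route, which is short.

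First, for (a), the map $G\mapsto AGB$ is linear and hence continuous, and $T_p(B\xi)$ is nonempty and compact by assumption; therefore $T_{ns}(\xi)$ is nonempty and compact. For (b), the composition $\xi\mapsto T_p(B\xi)$ is upper semicontinuous at $\bar\xi$ because $B$ is continuous and $T_p$ is upper semicontinuous at $B\bar\xi$; composing further with the continuous linear transformation $G\mapsto AGB$ preserves upper semicontinuity, so $T_{ns}$ is upper semicontinuous at $\bar\xi$.

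The main (but still routine) step is the approximation property (c). For $\xi\neq\bar\xi$ and $V=AGB\in T_{ns}(\xi)$ with $G\in T_p(B\xi)$, using the linearity $B(\bar\xi-\xi)=B\bar\xi-B\xi$ I would write
\begin{equation*}
\Phi_{ns}(\xi)+V(\bar\xi-\xi)-\Phi_{ns}(\bar\xi)=A\bigl[\prox{\phic}(B\xi)+G(B\bar\xi-B\xi)-\prox{\phic}(B\bar\xi)\bigr].
\end{equation*}
Taking norms and multiplying and dividing by $\|B\xi-B\bar\xi\|$ (on the set where it is nonzero) gives
\begin{equation*}
\frac{\|\Phi_{ns}(\xi)+V(\bar\xi-\xi)-\Phi_{ns}(\bar\xi)\|}{\|\xi-\bar\xi\|}\le \|A\|\,\|B\|\cdot\frac{\|\prox{\phic}(B\xi)+G(B\bar\xi-B\xi)-\prox{\phic}(B\bar\xi)\|}{\|B\xi-B\bar\xi\|}.
\end{equation*}
As $\xi\to\bar\xi$ one has $B\xi\to B\bar\xi$, so the right-hand quotient tends to $0$ by the LNA property of $T_p$ at $B\bar\xi$ (applied with the point $B\xi\in\R^m$ and $G\in T_p(B\xi)$). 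On the exceptional set where $B\xi=B\bar\xi$ but $\xi\neq\bar\xi$, the bracket above vanishes identically, so the quotient is zero. This yields condition (c) and completes the proof.

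The only place that requires genuine care is the approximation step: one must be sure that the LNA limit for $T_p$ is taken at the correct base point $B\bar\xi$ along sequences $B\xi\to B\bar\xi$ that need not sweep out a neighborhood of $B\bar\xi$, but this is exactly the unrestricted limit in Definition~\ref{def:linear Newton}(c), so no additional hypothesis is needed. I do not expect any serious obstacle beyond bookkeeping.
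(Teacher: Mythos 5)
Your proof is correct, and it differs from the paper's in how it handles the key approximation property. The paper verifies conditions (a) and (b) exactly as you do (compactness and upper semicontinuity are preserved under the continuous transformation $G\mapsto AGB$ and the continuous inner map $\xi\mapsto B\xi$), but for condition (c) it simply invokes the general sum and chain rules for linear Newton approximations \cite[Thms.~7.5.17--7.5.18]{Facchinei+Pang-Finivariineqcomp:03a}, whereas you prove (c) from scratch by exploiting the fact that $\Phi_{ns}=A\,\prox{\phic}(B\,\cdot)$ with \emph{constant} linear maps $A$ and $B$. Your direct computation is sound: the identity $\Phi_{ns}(\xi)+V(\bar\xi-\xi)-\Phi_{ns}(\bar\xi)=A\bigl[\prox{\phic}(B\xi)+G(B\bar\xi-B\xi)-\prox{\phic}(B\bar\xi)\bigr]$ is right, the bound by $\|A\|\,\|B\|$ times the quotient for $T_p$ at the base point $B\bar\xi$ follows from $\|B\xi-B\bar\xi\|\le\|B\|\,\|\xi-\bar\xi\|$, and you correctly dispose of the degenerate case $B\xi=B\bar\xi$, $\xi\neq\bar\xi$, where the bracket vanishes identically. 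Your closing remark is also accurate: the limit in Definition~\ref{def:linear Newton}(c) is an unrestricted $\epsilon$--$\delta$ statement over all points near $B\bar\xi$, so it applies along the (possibly non-surjective) family $\{B\xi\}$ without further assumptions. What your route buys is a self-contained argument that makes the role of the constants $\|A\|$ and $\|B\|$ explicit; what the paper's route buys is brevity and a template that would still work if the outer and inner maps were nonlinear smooth maps rather than constant matrices.
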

\begin{proof}
Since $T_p$ is upper semi-continuous and the set $T_p(z)$ is compact by definition, so is the set-valued map $(x,\lambda)\rightarrow T_{ns}(x,\lambda)$,
which implies that the $T_{ns}$ satisfies the conditions {\rm (a)} and {\rm (b)} in Definition.~\ref{def:linear Newton}. One can verify that the set valued map $T_{ns}$ satisfies the condition {\rm (c)} in the definition by employing the sum rule (\cite[Thm.~7.5.18]{Facchinei+Pang-Finivariineqcomp:03a}) and the chain rule (\cite[Thm.~7.5.17]{Facchinei+Pang-Finivariineqcomp:03a}).
\end{proof}

We now turn our attention to define a possible LNA scheme of
a proximity operator.
By Proposition~\ref{prop:basic}, the proximity operator is nonexpansive, and therefore it is
Lipschitz continuous. Hence the limiting Jacobian $\partial_B(\prox{\phi/c})(z)$ is well-defined for all $z\in \R^m$, and so also is the generalized Jacobian $\partial(\prox{\phi/c})(z)$. The next result, due to \cite[Thm.~3.2]{Patrinos+StellaETAL-ForwtrunNewtmeth:14}, gives the basic properties of the generalized Jacobian of the proximity operator.
\begin{proposition}\label{prop:jacobian}
For any $\phi \in \Gamma_0(\R^m)$, every $G \in \partial(\prox{\phic})(z)$ is a symmetric positive semidefinite matrix with
$\Vert G\Vert \le 1$.
\end{proposition}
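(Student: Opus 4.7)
The plan is to realize $\prox{\phic}$ as the gradient of a convex $C^{1,1}$ function on $\R^m$ and then read off the three assertions from convexity (positive semidefiniteness), Alexandrov's theorem on twice-differentiability of convex functions (symmetry), and nonexpansiveness (the norm bound). By Proposition~\ref{prop:basic}{\rm (d)},
\begin{equation*}
\prox{\phic}(z) = z - \tfrac{1}{c}D_z\phi_c(z) = D_z g(z), \qquad g(z):= \tfrac{1}{2}\|z\|^2 - \tfrac{1}{c}\phi_c(z).
\end{equation*}
The first step would be to verify that $g$ is convex. Proposition~\ref{prop:basic}{\rm (e)} gives that $D_z(\phi_c/c)$ is $1$-Lipschitz, so Cauchy--Schwarz yields $(D_z(\phi_c/c)(z)-D_z(\phi_c/c)(w),z-w)\le \|z-w\|^2$, which rearranges to monotonicity of $D_zg$; hence $g$ is convex. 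Simultaneously $D_zg = \prox{\phic}$ is $1$-Lipschitz by Proposition~\ref{prop:basic}{\rm (c)}.

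The second step uses Alexandrov's theorem on a.e.\ twice-differentiability of convex functions: there is a full-measure set $\Omega\subset\R^m$ on which $g$ admits a classical second-order Taylor expansion, and at every $z'\in\Omega$ the map $D_zg = \prox{\phic}$ is Fr\'echet differentiable with Jacobian equal to the classical Hessian $D_z^2 g(z')$, which is symmetric, positive semidefinite (by convexity of $g$), and of spectral norm at most $1$ (by the $1$-Lipschitz property of $D_zg$). Writing
\begin{equation*}
\mathcal{S}:=\{M\in\R^{m\times m}: M=M^{\rm T},\ M\succeq 0,\ \|M\|\le 1\},
\end{equation*}
which is closed and convex, I would note that choosing the null set $N_{\prox{\phic}}$ in the definition of the limiting Jacobian to be $\R^m\setminus\Omega$ gives $\partial_B(\prox{\phic})(z)\subset\mathcal{S}$ for every $z$, since limits of elements of $\mathcal{S}$ remain in $\mathcal{S}$. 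Passing to the convex hull then yields $\partial(\prox{\phic})(z)=\mathrm{conv}(\partial_B(\prox{\phic})(z))\subset\mathcal{S}$, which is the stated conclusion.

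The main obstacle in this plan is the symmetry assertion. Positive semidefiniteness and the bound $\|G\|\le 1$ already follow quite cheaply from monotonicity and nonexpansiveness of $\prox{\phic}$ respectively, whereas symmetry of $G$ genuinely requires identifying the Jacobian of $\prox{\phic}$ at an a.e.\ point with a classical Hessian; the representation $\prox{\phic}=D_zg$ with $g$ convex, combined with Alexandrov's theorem, is what makes that identification possible.
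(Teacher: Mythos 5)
The paper itself offers no proof of this proposition: it is quoted directly from \cite[Thm.~3.2]{Patrinos+StellaETAL-ForwtrunNewtmeth:14}, so there is no internal argument to compare against. Your reconstruction follows what is essentially the standard (and the cited reference's) route: realize $\prox{\phic}=D_zg$ for the convex $C^{1,1}$ function $g(z)=\tfrac12\|z\|^2-\tfrac1c\phi_c(z)$, obtain positive semidefiniteness of the a.e.\ Jacobian from monotonicity of $D_zg$, the bound $\|G\|\le 1$ from nonexpansiveness, symmetry from the Hessian interpretation, and then pass to limits and convex hulls using that the set $\mathcal{S}$ of symmetric positive semidefinite contractions is closed and convex. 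All of these reductions are correct, and your identification of symmetry as the only genuinely delicate point is accurate.

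Two steps deserve more care than you give them. First, the symmetry step: Alexandrov's theorem in its usual form (e.g.\ the Evans--Gariepy version the paper cites for Rademacher) provides a second-order Taylor expansion of $g$ at a.e.\ point with a symmetric matrix $A$, but does \emph{not} by itself assert that $D_zg$ is Fr\'echet differentiable there with Jacobian $A$; an integration argument only identifies $A$ with the \emph{symmetric part} of the Rademacher Jacobian of $D_zg$, which is not enough. The clean fix is to note that $\prox{\phic}=D_zg$ is Lipschitz, hence differentiable a.e.\ by Rademacher, and that since $g\in C^{1,1}$ the distributional identities $\partial_i\partial_j g=\partial_j\partial_i g$ are realized a.e.\ by the classical partial derivatives of $\prox{\phic}$, so the Jacobian is symmetric a.e.; alternatively, invoke the strong form of Alexandrov's theorem that includes differentiability of the gradient (subgradient) map. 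Second, your discarding of a null set: this is legitimate under the paper's Definition of $\partial_B$, which permits any measure-zero set $N_\Phi$ off which $\Phi$ is differentiable, but under the canonical definition (where one takes exactly the set of nondifferentiability) you would need either symmetry at \emph{every} point of differentiability or the fact that the Clarke generalized Jacobian is blind to sets of measure zero. Neither issue is fatal, but both should be made explicit for the proof to be complete.
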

Now we can specify a LNA scheme of the map $D_{x,\lambda}L_c$ at $(x,\lambda)$.
\begin{proposition}\label{prop:GJ}
Let $\phi\in \Gamma_0(\R^m)$ and $c>0$. Assume that the proximity operator $\prox{\phic}$ is semismooth.
Then the set-valued map $T\colon \R^n\times \R^m \rightrightarrows \R^{(n + m)\times(n+m)}$ defined by
\begin{equation}\label{partial_Phi_c}
T(x,\lambda) :=\left\{
  \left[
    \begin{array}{c;{2pt/2pt}c}
      D^2_x f(x) + cE^{\rm T} (I- G)E  & ((I- G)E)^{\rm T}  \\ \hdashline[2pt/2pt]
    (I- G)E & -c^{-1} G
    \end{array}
\right]
 \mid G\in \partial(\prox{\phic})(z)\right\},
\end{equation}
with $z=E x +\lambda/c$, is a LNA scheme of the map $\Phi_c$ at $(x,\lambda)\in \R^n\times\R^m$.
\end{proposition}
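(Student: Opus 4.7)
The plan is to exploit the decomposition $\Phi_c = \Phi_s - \Phi_{ns}$ already recorded in the paper and apply the sum rule for linear Newton approximations. Because $\Phi_s$ is continuously differentiable (tacitly assuming $f\in C^2$ so that $D^2_x f$ appears in the claimed matrix), its classical Jacobian $D_{x,\lambda}\Phi_s$ is automatically a LNA scheme at every point. It therefore suffices to produce a LNA scheme of $\Phi_{ns}$ and then subtract.

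To build a LNA scheme of $\Phi_{ns}$, I would invoke Lemma~\ref{lem:linearNewton_prox0}, which reduces the construction to supplying a LNA scheme of the proximity operator $\prox{\phic}$. By Proposition~\ref{prop:basic}{\rm (c)}, $\prox{\phic}$ is nonexpansive, hence globally Lipschitz continuous; combined with the semismoothness hypothesis, Proposition~\ref{B_LNA} lets me take the generalized Jacobian $\partial(\prox{\phic})$ as a LNA scheme of the proximity operator. Substituting $T_p = \partial(\prox{\phic})$ into Lemma~\ref{lem:linearNewton_prox0} then yields a LNA scheme $T_{ns}$ whose elements are the rank-structured products $\begin{bmatrix} cE^{\rm T}\\ I\end{bmatrix} G \begin{bmatrix} E & c^{-1}I \end{bmatrix}$ for $G\in \partial(\prox{\phic})(Ex+\lambda/c)$.

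The remaining step is to check that $T = D_{x,\lambda}\Phi_s - T_{ns}$ coincides with the matrix family in~\eqref{partial_Phi_c}. The block-wise subtraction is routine: the diagonal entries are $D^2_x f(x) + cE^{\rm T} E - cE^{\rm T} G E = D^2_x f(x) + cE^{\rm T}(I-G)E$ and $0 - c^{-1}G = -c^{-1}G$, while the off-diagonal entries are $E^{\rm T} - E^{\rm T}G$ and $E - GE$. At this point I would invoke Proposition~\ref{prop:jacobian}, which ensures $G$ is symmetric, so that $(I-G)^{\rm T} = I-G$ and the upper-right block rewrites as $((I-G)E)^{\rm T}$, matching the expression in~\eqref{partial_Phi_c}. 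The sum rule for LNAs (see \cite[Thm.~7.5.18]{Facchinei+Pang-Finivariineqcomp:03a}) then certifies that $T$ is itself a LNA scheme of $\Phi_c$.

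The only genuinely nontrivial point in this chain of reasoning is the chain-rule passage through the affine inner map $(x,\lambda)\mapsto Ex + \lambda/c$ that is hidden in the definition of $\Phi_{ns}$, but this is precisely what Lemma~\ref{lem:linearNewton_prox0} has already packaged, so the argument reduces essentially to verifying the three defining properties in Definition~\ref{def:linear Newton} by composing earlier results and then to performing the explicit block-matrix arithmetic above.
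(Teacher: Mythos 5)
Your proposal is correct and follows essentially the same route as the paper's own proof: the decomposition $\Phi_c=\Phi_s-\Phi_{ns}$, Lemma~\ref{lem:linearNewton_prox0} with $T_p=\partial(\prox{\phic})$ justified by semismoothness and Proposition~\ref{B_LNA}, the symmetry of $G$ from Proposition~\ref{prop:jacobian} to identify the off-diagonal block as $((I-G)E)^{\rm T}$, and the sum rule to conclude. The only difference is cosmetic (the paper verifies the block-matrix identity first and then assembles the LNA argument, whereas you do it in the reverse order), and your aside about tacitly needing $f\in C^2$ is a fair observation that the paper leaves implicit as well.
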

\begin{proof}
The symmetry of the generalized Jacobian of a proximity operator allows to write
$E^{\rm T} G = (GE)^{\rm T}$ for $G\in\partial (\prox{\phic})(E x + \lambda/c)$, which yields
\begin{align*}
  \left[
    \begin{array}{c;{2pt/2pt}c}
      D^2_x f(x) + cE^{\rm T} (I- G)E  & ((I- G)E)^{\rm T}  \\ \hdashline[2pt/2pt]
    (I- G)E & -c^{-1} G
    \end{array}
\right]=
  \left[
    \begin{array}{c;{2pt/2pt}c}
      D^2_x f(x) + cE^{\rm T} E  & E^{\rm T}  \\ \hdashline[2pt/2pt]
   E & 0
    \end{array}
\right]
-
\begin{bmatrix}
   cE^{\rm T} \\ I
\end{bmatrix}
G
\begin{bmatrix}
 E  &  c^{-1}I
\end{bmatrix}.
\end{align*}
From Proposition~\ref{B_LNA} and the assumption that $\prox{\phic}$ is semismooth, it follows that the generalized Jacobian
$\partial(\prox{\phic})(z)$ is a LNA scheme of the proximity operator $\prox{\phic}(z)$, which together with Lemma~\ref{lem:linearNewton_prox0} shows that $T_{ns}(x,\lambda)$ with $T_p(E x +\lambda/c)=\partial(\prox{\phic})(E x +\lambda/c)$ defines a LNA scheme of $\Phi_{ns}$ at $(x,\lambda)$. Thus $T=D_{x,\lambda}\Phi_s - T_{ns}$ defines a LNA scheme of $\Phi_c$ at $(x,\lambda)$.
\end{proof}
\begin{remark}
One can replace the generalized Jacobian $\partial(\prox{\phic})(z)$ in~\eqref{partial_Phi_c} with the limiting Jacobian $\partial_B(\prox{\phic})(z)$.
\end{remark}
\begin{remark}
The class of semismooth maps is broad enough to include a variety of proximity operators frequently encountered in practice, see, e.g., \cite[Sect.~5]{Patrinos+StellaETAL-ForwtrunNewtmeth:14}.
\end{remark}

The proposed algorithm is given in Algorithm~\ref{alg:ssn1}.
\begin{algorithm}[H]
  \caption{Linear Newton algorithm for the Lagrange optimality system.}\label{alg:ssn1}
  \begin{algorithmic}[1]
\STATE Chose $(x^0,\lambda^0)\in\R^n\times\R^m$.
 \STATE If $\Phi_c(x^k,\lambda^k)=0$, stop.
 \STATE Let $z^k=E x^k +\lambda^k/c$, and compute an element $G_k$ of the generalized Jacobian $\partial({\prox{\phic}})(z^k)$.
 \STATE Compute a direction $(d^k_x,d^k_{\lambda})$ by
\begin{equation}\label{Newton1}
  \left[
    \begin{array}{c;{2pt/2pt}c}
      D^2_x f(x^k) + cE^{\rm T} (I- G_k)E  & ((I- G_k)E)^{\rm T}  \\ \hdashline[2pt/2pt]
    (I- G_k)E & -c^{-1} G_k
    \end{array}
\right]
   \begin{bmatrix}
   d^k_x \\   d^k_\lambda
  \end{bmatrix}   =
   -\begin{bmatrix}
    D_xL_c(x^k,\lambda^k)\\
    D_{\lambda}L_c(x^k,\lambda^k)
  \end{bmatrix}.
  \end{equation}
\STATE Set $x^{k+1} = x^k + d^k_x$ and $\lambda^{k+1}=\lambda^k + d^k_{\lambda}$.
\STATE Go back to Step 2.
\end{algorithmic}
\end{algorithm}
\begin{remark}
Proposition~\ref{B_LNA} allows to replace the generalized Jacobian $\partial(\prox{\phic})(z)$ with the limiting Jacobian $\partial_B(\prox{\phic})(z)$.
\end{remark}
\begin{remark}
A simple calculation using Theorem~\ref{prop:KKT} shows that the update at Steps 4 and 5 can be replaced with
\begin{align}\label{Newton2}
\left[
    \begin{array}{c;{2pt/2pt}c}
        D^2_xf(x^k) & E^{\rm T}  \\ \hdashline[2pt/2pt]
 (I- G_k)E & -{c^{-1}}G_k
    \end{array}
\right]
\begin{bmatrix}
x^{k+1}\\
\lambda^{k+1}
\end{bmatrix}
=
\begin{bmatrix}
  D_x^2f(x^k)x^k -D_xf(x^k)\\
  \prox{\phic}(z^k)-G_kz^k
  \end{bmatrix}.
\end{align}
\end{remark}
The local convergence of Algorithm~\ref{alg:ssn1} follows from Theorem~\ref{thm:converge}, if
every element of $T(x,\lambda)$ defined by~\eqref{partial_Phi_c} is nonsingular.
The next result gives one sufficient condition for the nonsingularity.
\begin{proposition}\label{prop:nonsingular}
Assume that $E$ is surjective, $D^2f(x)$ is strictly positive definite, and
the norm is bound from below uniformly in $x$, that is, there exists a $\delta>0$ such that
\begin{equation*}
(D^2_xf(x)d,d)> \delta \Vert d\Vert^2\quad \forall d\in \R^n.
\end{equation*}
Then every element of $T(x,\lambda)$ is nonsingular for all $(x,\lambda)$.
\end{proposition}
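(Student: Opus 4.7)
The plan is to show that any vector $v = (d,\mu) \in \R^n \times \R^m$ lying in the kernel of a matrix $M \in T(x,\lambda)$ must vanish. Writing out the two block equations of $Mv = 0$ and setting $A := D^2_xf(x) + cE^{\rm T}(I-G)E$ yields
\begin{equation*}
Ad + E^{\rm T}(I-G)\mu = 0, \qquad (I-G)Ed - c^{-1}G\mu = 0.
\end{equation*}
By Proposition~\ref{prop:jacobian}, $G$ is symmetric with $0 \preceq G \preceq I$, so both $G$ and $I-G$ are positive semidefinite, and the off-diagonal blocks of $M$ are transposes of one another.

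The crucial step is to take an \emph{indefinite} inner product rather than the naive $v^{\rm T}Mv$. Specifically, I would compute $d^{\rm T}$ times the first equation minus $\mu^{\rm T}$ times the second. The cross terms $d^{\rm T}E^{\rm T}(I-G)\mu$ and $\mu^{\rm T}(I-G)Ed$ are equal as scalars (each is the transpose of the other), so they cancel, producing the clean identity
\begin{equation*}
d^{\rm T}Ad + c^{-1}\mu^{\rm T}G\mu = 0.
\end{equation*}
Both summands are nonnegative: $A$ is strictly positive definite, since $D^2_xf(x)\succ \delta I$ by hypothesis and $cE^{\rm T}(I-G)E \succeq 0$; and $G \succeq 0$.

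I would then conclude $d^{\rm T}Ad = 0$ and $\mu^{\rm T}G\mu = 0$ separately. The former, combined with the uniform coercivity bound, forces $d = 0$; the latter forces $G\mu = 0$. Substituting $d=0$ back into the first block equation collapses it to $E^{\rm T}(I-G)\mu = E^{\rm T}\mu - E^{\rm T}G\mu = E^{\rm T}\mu = 0$. Since $E$ is surjective in finite dimensions, $E^{\rm T}$ is injective, so $\mu = 0$, completing the argument. The only mildly subtle point --- not really an obstacle --- is recognizing that the naive quadratic form $v^{\rm T}Mv$ is \emph{not} sign-definite, due to the $-c^{-1}G$ block, and that one must instead use the sign-adjusted combination corresponding to multiplying $M$ on the left by $\operatorname{diag}(I,-I)$, which restores positive semidefiniteness and makes the conclusion immediate.
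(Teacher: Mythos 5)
Your proof is correct. The only real difference from the paper is that the paper black-boxes the key step: it invokes the general nonsingularity criterion for saddle-point matrices $\left[\begin{smallmatrix} A & B^{\rm T} \\ B & -C\end{smallmatrix}\right]$ with $A\succ 0$, $C\succeq 0$ (namely $\ker(C)\cap\ker(B^{\rm T})=\{0\}$, cited from Benzi--Golub--Liesen), and then only verifies that hypothesis: for $d\in\ker(G)\cap\ker(((I-G)E)^{\rm T})$ the symmetry of $G$ gives $E^{\rm T}d=0$, whence $d=0$ by surjectivity of $E$. You instead re-derive that criterion inline for this particular matrix via the sign-adjusted quadratic form $d^{\rm T}(\text{row 1})-\mu^{\rm T}(\text{row 2})$, which cancels the cross terms and yields $d^{\rm T}Ad+c^{-1}\mu^{\rm T}G\mu=0$; from there $d=0$, $G\mu=0$, and $E^{\rm T}\mu=0$ follow exactly as in the paper's verification. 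The structural inputs are identical in both arguments --- symmetry and positive semidefiniteness of $G$ and $I-G$ from Proposition~\ref{prop:jacobian}, strict positive definiteness of the $(1,1)$ block from the coercivity assumption, and injectivity of $E^{\rm T}$ from surjectivity of $E$. What your version buys is self-containedness (no appeal to the saddle-point literature); what the paper's version buys is brevity and a pointer to the general theory. One cosmetic remark: the observation that $\mu^{\rm T}G\mu=0$ forces $G\mu=0$ deserves a half-sentence (write $G=G^{1/2}G^{1/2}$, valid since $G$ is symmetric positive semidefinite), but this is standard and not a gap.
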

\begin{proof}
A saddle point matrix of the form
\begin{equation*}
\begin{bmatrix}
A & B^{\rm T} \\
B & -C
\end{bmatrix},
\end{equation*}
where $A$ is symmetric positive definite and $C$ is symmetric positive semidefinite,
is nonsingular {if} $\mbox{ker}(C)\cap \mbox{ker}(B^{\rm T})=0$, see, e.g., \cite[Thm.~3.1]{Benzi+GolubETAL-Numesolusaddpoin:05}. Note that $D^2_xf(x)$ is symmetric positive
definite by assumption, and $G$ and $I-G$ are symmetric positive semidefinite, cf.
Proposition~\ref{prop:jacobian}. Hence the matrix $ D^2_xf(x) + cE^{\rm T}(I-G)E$
is symmetric positive definite. Now let $d \in \mbox{ker}(G)\cap \mbox{ker}(((I-G)
E)^{\rm T})$. We then have
\begin{equation*}
Gd=0\quad \mbox{and}\quad  ((I-G)E)^{\rm T} d =0.
\end{equation*}
Appealing again to the identity $G^*=G$ from Proposition~\ref{prop:jacobian}, it immediately follows that
$
E^{\rm T} d = 0.
$
Then the surjectivity of $E$ implies $d\in \mbox{ker}(E^{\rm T}) = \mbox{Im}(E)^{\bot} = 0$.
\end{proof}
The local convergence of Algorithm 1 follows from Theorem~\ref{thm:converge},
Propositions~\ref{prop:GJ} and Proposition~\ref{prop:nonsingular}.
\begin{theorem}\label{thm:local}
Let $f$ be smooth, $\phi\in \Gamma_0(\R^m)$, and $c>0$. Let us assume there exits a unique solution $(\bar{x},\bar{\lambda})$
 of the Lagrange optimality system \eqref{KKT}. We also assume that the assumptions on $f$ and $E$ in Proposition~\ref{prop:nonsingular} are satisfied,
and that the proximity operator is semismooth on $\R^m$. Then the Newton system~\eqref{Newton1} is
solvable, and the sequence $(x^k,\lambda^k)$ generated by Algorithm~\ref{alg:ssn1} converges to the
solution $(\bar{x},\bar{\lambda})$ superlinearly in a neighborhood of $(\bar{x},\bar{\lambda})$.
\end{theorem}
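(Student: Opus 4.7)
The plan is to assemble this theorem from three already-proven ingredients in the paper: the abstract linear Newton convergence result Theorem~\ref{thm:converge}, the identification of the set-valued map $T$ in~\eqref{partial_Phi_c} as a LNA scheme of $\Phi_c$ (Proposition~\ref{prop:GJ}), and the nonsingularity statement Proposition~\ref{prop:nonsingular}. So the task is essentially a verification that the hypotheses of Theorem~\ref{thm:converge} are met at the point $\xi^\ast=(\bar{x},\bar{\lambda})$, applied to the map $\Phi_c$.

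First I would check that $\Phi_c$ is locally Lipschitz continuous on $\R^n\times\R^m$, which is required by Theorem~\ref{thm:converge}. The smooth part $\Phi_s$ is $C^1$ since $f\in C^2$ (its Hessian is assumed well-defined and uniformly positive definite), and the nonsmooth part $\Phi_{ns}$ is Lipschitz because the proximity operator $\prox{\phic}$ is nonexpansive by Proposition~\ref{prop:basic}(c); composition with the bounded linear maps $E$, $E^{\rm T}$ preserves Lipschitz continuity. Next, because $\prox{\phic}$ is semismooth by hypothesis, Proposition~\ref{prop:GJ} furnishes a LNA scheme $T$ of $\Phi_c$ at every point, and in particular at $(\bar{x},\bar{\lambda})$; by construction $\Phi_c(\bar{x},\bar{\lambda})=0$ follows from the Lagrange optimality system~\eqref{KKT}.

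Second, I would verify that every matrix $V\in T(\bar{x},\bar{\lambda})$ is nonsingular. This is exactly the content of Proposition~\ref{prop:nonsingular}, whose hypotheses (surjectivity of $E$ and uniform strict positive definiteness of $D^2_xf$) are assumed. With Lipschitz continuity, the LNA scheme property, and the nonsingularity at the solution all in place, Theorem~\ref{thm:converge} applies directly and delivers local Q-superlinear convergence of the sequence $(x^k,\lambda^k)$ produced by the iteration~\eqref{gn}, which is exactly the update rule of Algorithm~\ref{alg:ssn1}.

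Finally I would address the solvability of the Newton system~\eqref{Newton1} in a neighborhood. The map $T$ is upper semicontinuous (part of the LNA definition), its values are compact, and each matrix in $T(\bar{x},\bar{\lambda})$ is invertible; a standard semicontinuity argument then yields a neighborhood $U$ of $(\bar{x},\bar{\lambda})$ on which every $G\in T(x,\lambda)$ remains invertible, so that the Newton step is well-defined at each iterate that enters $U$. Combined with the convergence statement from Theorem~\ref{thm:converge}, this completes the argument. I do not anticipate a genuine obstacle here, since all the heavy lifting—the LNA calculus, the nonsingularity analysis via the saddle-point matrix structure, and the abstract superlinear convergence theorem—has already been carried out; the only mild care needed is the neighborhood-uniform invertibility argument just described, which is routine once upper semicontinuity of $T$ is invoked.
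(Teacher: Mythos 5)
Your proposal is correct and follows exactly the route the paper itself takes: the paper's entire justification is the one-line remark that Theorem~\ref{thm:local} follows from Theorem~\ref{thm:converge}, Proposition~\ref{prop:GJ} and Proposition~\ref{prop:nonsingular}, and you have simply written out that assembly in full, adding the routine but worthwhile verifications (local Lipschitz continuity of $\Phi_c$ via nonexpansiveness of $\prox{\phic}$, the fact that $\Phi_c(\bar{x},\bar{\lambda})=0$ from~\eqref{KKT}, and neighborhood-uniform invertibility for solvability of~\eqref{Newton1}) that the paper leaves implicit.
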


\subsection{Examples}
We illustrate Algorithm~\ref{alg:ssn1} on {two} examples: bilateral constraints and 
$\ell^1$ penalty. 
We begin with a useful result for computing the generalized (limiting) Jacobian for (block) separable functions \cite[Prop.~3.3]{Patrinos+StellaETAL-ForwtrunNewtmeth:14}.
Let $(m_1,\ldots,m_N)$ be an $N$ partition of $m$, i.e., $\sum_{i=1}^N m_i = m$, and $z\in \R^m$ be decomposed
into $N$ blocks of variables with $z_i\in \R^{m_i}$.  The function $\phi\in \Gamma_0(\R^m)$ is said to
be \textit{(block) separable} if $\phi(z) =\sum_{i=1}^{N}\phi_i(z_i)$ for $N$ functions $\phi_i\in \Gamma_0(\R^m)$.
\begin{proposition}\label{prop:block}
  If $\phi\in \Gamma_0(\R^m)$ is (block) separable then every element of the generalized Jacobian
  $\partial(\prox{\phic})(x)$ is also a (block) diagonal matrix.
\end{proposition}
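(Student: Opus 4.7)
The plan is to push the separability all the way down through the successive constructions: first to the proximity operator itself, then to its classical Jacobian at points of differentiability, then to the limiting Jacobian via limits of block-diagonal matrices, and finally to the generalized Jacobian via convex hulls.

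First I would observe that if $\phi(z)=\sum_{i=1}^N \phi_i(z_i)$ with $z=(z_1,\ldots,z_N)$ in the prescribed partition, then the minimization defining $\prox{\phic}(z)$ decouples:
\[
 \prox{\phic}(z) = \arg\min_{u}\sum_{i=1}^N\Bigl(\phi_i(u_i)+\tfrac{c}{2}\|u_i-z_i\|^2\Bigr),
\]
so that each block of the minimizer is obtained independently, $(\prox{\phic}(z))_i = \prox{\phi_i/c}(z_i)$. In other words, $\prox{\phic}$ itself is a Cartesian product of lower-dimensional proximity operators, and in particular the $i$-th block of the output depends only on the $i$-th block of the input.

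Next, at every point $z$ where $\prox{\phic}$ is (classically) differentiable, each $\prox{\phi_i/c}$ is differentiable at $z_i$, and the chain rule gives a block-diagonal Jacobian with diagonal blocks $D\prox{\phi_i/c}(z_i)$. To pass to the limiting Jacobian $\partial_B(\prox{\phic})(x)$, I would take an element $G=\lim_k D\prox{\phic}(z^k)$ along a sequence $z^k\to x$ of points of differentiability; each $D\prox{\phic}(z^k)$ is block diagonal by the previous step, and block diagonality is preserved under taking limits of matrices entry-wise, so $G$ is block diagonal. For the generalized Jacobian $\partial(\prox{\phic})(x) = \mbox{conv}(\partial_B(\prox{\phic})(x))$, I would use that the set of block-diagonal matrices with the given partition is a linear subspace of $\R^{m\times m}$, hence closed under convex combinations; therefore every element of the convex hull remains block diagonal.

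I do not anticipate a substantial obstacle: the argument is essentially the assertion that separability of the argmin problem implies separability of the optimizer and that the block-diagonal structure is preserved by differentiation, limit-taking, and convex combination. The only point that deserves a careful word is the limit step, where one must note that the proximity operators $\prox{\phi_i/c}$ are themselves locally Lipschitz (by Proposition~\ref{prop:basic}(c) applied block-wise) so that $\partial_B$ on each block is well defined, and that any $G\in\partial_B(\prox{\phic})(x)$ is exhibited through a sequence of classical Jacobians, all of which are already block diagonal by the first two steps.
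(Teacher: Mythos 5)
Your proof is correct. Note that the paper itself gives no proof of this proposition --- it is imported verbatim as Proposition~3.3 of \cite{Patrinos+StellaETAL-ForwtrunNewtmeth:14} --- and your argument (the prox decouples blockwise, so classical Jacobians at points of differentiability are block diagonal, and block diagonality survives the limit defining $\partial_B$ and the convex hull defining $\partial$, since block-diagonal matrices form a closed linear subspace) is exactly the standard one that fills in that citation, so there is nothing to compare beyond observing that your write-up is a complete, self-contained substitute for the reference.
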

\begin{example}\label{exam:ineqconstraint}
Let us consider the following optimization problem with bilateral inequality constraints
\begin{equation*}
\min_{x\in \R^n} f(x) \quad \mbox{subject to} \quad a\le E x \le b,
\end{equation*}
where $f$ is a smooth function, $a,b \in \R^m$ and $E \in \mathbb{R}^{m\times n}$.
\end{example}

The problem can be reformulated into~\eqref{min} with $\phi(z) = I_S(z)$, where $I_S(z)$ is the
characteristic function of the set $S=\{z\in\R^m\mid a_j \le z_j \le b_j, \; j=1,\ldots,m\}$.
Clearly, the proximity operator $\prox{\phic}:\mathbb{R}^m\to\mathbb{R}^m$ is given by
\begin{equation*}
\prox{\phic}(z)
=\left[\max(a_1,\min(b_1,z_1)),\ldots,\max(a_m,\min(b_m,z_m))\right]^{\rm T}.
\end{equation*}
Since the proximity operator is separable, a limiting Jacobian
$G\in\partial_B\;{\prox{\phic}}(z)$ is diagonal matrix by Proposition~\ref{prop:block}:
\begin{equation*}
G_{j,j} = \left\{\begin{array}{cl}
    1 &  \mbox{if } a_j < z_j < b_j,\\
    \{0,1\} & \mbox{if } z_j \in \{a_j, b_j\},\\
    0 & \mbox{otherwise}.
  \end{array}\right.
\end{equation*}
Now let $(x,\lambda)$ be the current iterate, and $z=E x +\lambda/c$. We denote by  $\mo$
the index set $\{j\mid G_{j,j} =0\}\subset \{1,2,\ldots,m\}$, and by $\mi$ its complement.
Then $\mi\cap\mo = \emptyset$ and $\mi\cup\mo=\{1,2,\ldots,m\}$.
We shall denote by $x_{\mo}$ the subvector of $x$,
consisting of entries of $x$ whose indices are listed in $\mo$. The submatrix of $E$ denoted by $E_\mo$ is defined analogously. For example, if
$\mo = \{o_1,o_2,\ldots,o_{\ell}\}$ where $\ell$ is the number of elements of the set $\mo$,
then $x_{\mo} $ is $\ell\times 1$ column vector, and $A_{\mo} $ is $\ell \times n$ matrix
given respectively by
\begin{equation*}
x_{\mo} =
\begin{bmatrix}
x_{o_1} \\
x_{o_2}\\
\vdots\\
x_{o_m}
\end{bmatrix}
\quad
\mbox{ and } \quad A_{\mo} =
\begin{bmatrix}
A_{o_1,1} & A_{o_1,2} & \cdots   &A_{o_1,n} \\
A_{o_2,1} & A_{o_2,2} & \cdots   &A_{o_2,n} \\
\vdots  &            \vdots             &             & \vdots  \\
A_{o_\ell,1} & A_{o_\ell,2} & \cdots &A_{o_\ell,n}
\end{bmatrix}.
\end{equation*}
With the new updates denoted by $x^+$ and $\lambda^+$, the Newton update~\eqref{Newton2} yields
\begin{align*}
\left\{
\begin{array}{l}
\lambda^+_\mi =c(z-\prox{\phic}(z))_\mi,\\
\begin{bmatrix}
D^2_xf(x) & E_\mo^{\rm T} \\
E_\mo & 0
\end{bmatrix}
\begin{bmatrix} x^+ \\ \lambda^+_\mo
\end{bmatrix} =
\begin{bmatrix}
\displaystyle D^2_xf(x)x-D_xf(x) - E_\mi^{\rm T} \lambda^+_\mi \\
\prox{\phic}(z)_\mo
\end{bmatrix}.
\end{array}\right.
\end{align*}
In this example, we have $z_{\mi}=\prox{\phic}(z)_{\mi}$, and the Newton update is further simplified as
\begin{align*}
&\begin{bmatrix}
D^2_xf(x) & E_\mo^{\rm T} \\
E_\mo &0
\end{bmatrix}
\begin{bmatrix}
x^+ \\ \lambda^+_\mo
\end{bmatrix}
= \begin{bmatrix}
\displaystyle D^2_xf(x)x-D_xf(x)\\
\prox{\phic}(z)_\mo
\end{bmatrix}\qquad \mbox{and}\qquad
\lambda^+_\mi=0.
\end{align*}
In particular if $f$ is a quadratic function $f(x) = \frac{1}{2}(x,Ax)-(b,x)$, the algorithm reduces to the
primal-dual active set algorithm developed in \cite{Ito+Kunisch-Lagrmultapprvari:08,Bergounioux+ItoETAL-Primstraconsopti:99}:
\begin{align*}
&\begin{bmatrix}
A & E_\mo^{\rm T} \\
E_\mo &0
\end{bmatrix}
\begin{bmatrix}
x^+ \\ \lambda^+_\mo
\end{bmatrix}
= \begin{bmatrix}  b\\
\prox{\phic}(z)_\mo
\end{bmatrix}\qquad \mbox{and}\qquad
\lambda^+_\mi=0.
\end{align*}

\begin{example}\label{exam:l1}
Consider the following $\ell^1$ type optimization problem
\begin{equation*}
\min_{x\in \R^b} f(x) + \alpha \vert E x\vert_{\ell^1},
\end{equation*}
where $f$ is smooth function, $E \in \mathbb{R}^{m\times n}$, $\vert z \vert_{\ell^1} $ is the $\ell^1$ norm,
and $\alpha>0$ is a regularization parameter.
\end{example}
Let $\phi(z) = \alpha \vert z \vert_{\ell^1}$. Its proximity operator $\prox{\phic}$ is the well known soft-thresholding operator
\begin{equation*}
  \begin{aligned}
     \prox{\phic}(z) &= [\prox{\frac{ \alpha}{c}\vert \cdot \vert}(z_1), \ldots,\prox{\frac{ \alpha}{c}\vert \cdot \vert}(z_m)]^{\rm T},\\
     \prox{\frac{ \alpha}{c}\vert \cdot \vert}(s) &= \max(s-\tfrac{ \alpha}{c} ,\min(s+\tfrac{ \alpha}{c} ,0)),\quad s\in \R.
  \end{aligned}
\end{equation*}
A limiting Jacobian $G\in \partial_B(\prox{\phic})(z)$ is diagonal matrix given by
\begin{equation*}
G_{j,j} =\left\{
\begin{array}{ll}
1 &  \mbox{ if } \vert z_j\vert  > \frac{ \alpha}{c} ,\\
\{0,1\} & \mbox{ if } \vert z_j \vert= \frac{ \alpha}{c}, \\
0 & \mbox{ othewise. }
\end{array} \right.
\end{equation*}
We denote by  $\mo$ the index set $\{j\mid G_{j,j} =0\}\subset \{1,2,\ldots,m\}$, and by $\mi$
its complement, and $z=E x +\lambda/c$. We note that the relation $c(z-\prox{\phic}(z))_\mi=
c\;{\rm sign}(z_\mi)$ holds. An argument similar to Example~\ref{exam:ineqconstraint} yields the following Newton update
\begin{align*}
\left\{
\begin{array}{l}
\lambda^+_\mi= c\;{\rm sign}(z_\mi), \\[5pt]
\begin{bmatrix}
D^2_xf(x) & E_\mo^{\rm T} \\
E_\mo &0
\end{bmatrix}
\begin{bmatrix}
x^+ \\ \lambda^+_\mo
\end{bmatrix}
= \begin{bmatrix}
\displaystyle D^2_xf(x)x-D_xf(x) -E_\mi^{\rm T} \lambda^+_\mi \\
\prox{\phic}(z)_\mo
\end{bmatrix}.
\end{array}
\right.
\end{align*}
For the quadratic function $f=\frac{1}{2}(x,Ax)-(b,x)$, we obtain a primal-dual active set algorithm for $\ell^1$ norm regularization
\begin{equation*}
\left\{\begin{array}{l}
\lambda^+_\mi=c\;{\rm sign}(z_\mi),\\[5pt]
\begin{bmatrix}
A & E_\mo^{\rm T} \\
E_\mo &0
\end{bmatrix}
\begin{bmatrix}
x^+ \\
\lambda^+_\mo
\end{bmatrix}
=\begin{bmatrix}
\displaystyle b -E_\mi^{\rm T} \lambda^+_\mi\\
\prox{\phic}(z)_\mo
\end{bmatrix}.
\end{array}
\right.
\end{equation*}
\section{Conclusion}
In this paper, we have developed the classical Lagrange multiplier approach to a class of nonsmooth convex optimization problems
arising in various application domains.
We presented the Lagrange optimality system, and established the equivalence among the Lagrange optimality system, the standard optimality condition and the saddle point condition of the augmented Lagrangian. 
The Lagrange optimality system was used to derive a novel Newton algorithm. We proved the nonsingularity of the Newton system and established the local convergence of the algorithm.

In order to make the proposed Newton algorithm applicable to real word applications, a further study is needed on several important issues including: to construct a merit function for the globalization of the algorithm; to develop efficient solvers for the (possibly) large linear system (Newton update); to provide a stopping criterion, and to report the numerical performance of the algorithm. These issues will be investigated in future work.

\end{document}